\DeclareMathOperator{\Tr}{Tr}
\newcommand{\ov}{\overline}
\newcommand{\1}{\mathbbm{1}}
\newcommand{\R}{\mathbb{R}}
\newcommand{\beq}{\begin{equation}}
\newcommand{\eeq}{\end{equation}}
\def\a{\alpha}
\def\b{\beta}
\def\d{\delta}
\def\g{\gamma}
\def\l{\lambda}
\def\m{\mu}
\def\n{\nu}
\def\r{\rho}
\def\s{\sigma}
\def\th{\theta}
\def\t{\tau}
\def\e{\varepsilon}
\def\pd{\partial}
\def\half{\frac{1}{2}}
\newcommand{\cD}{{\cal D}}
\newcommand{\cN}{{\cal N}}
\newcommand{\diver}{{\rm div}}
\newcommand{\io}{\int_{\R^d}}
\newtheorem{theorem}{Theorem}[section]
\newtheorem{definition}[theorem]{Definition}
\newtheorem{proposition}[theorem]{Proposition}
\newtheorem{remark}[theorem]{Remark}
\numberwithin{equation}{section}
\title{A Mean Field Games approach\\to Cluster Analysis}
\date{}
\author{Laura Aquilanti, Simone Cacace, Fabio Camilli\\ and Raul De Maio}
\begin{document}
\maketitle
\begin{abstract}
In this paper, we develop a Mean Field Games approach to Cluster Analysis.  We consider a finite mixture model, given by a  convex combination of probability density functions,   to describe  the given data set.  We interpret
a data point as an agent  of one of the populations represented by the components of the mixture model, and we introduce a corresponding optimal control problem. In this way, we obtain a multi-population Mean Field Games system which characterizes the parameters  of the finite mixture model.  Our method can be interpreted as a continuous version of the classical Expectation-Maximization algorithm.
\end{abstract}

\noindent
{\footnotesize \textbf{AMS-Subject Classification:} 62H30, 35J47, 49N70, 91C20}.\\
{\footnotesize \textbf{Keywords:}  mixture model;  Cluster Analysis; Expectation-Maximization algorithm; Mean Field Games; multi-population model}.

\section{Introduction}
Cluster Analysis, a classical problem in unsupervised Machine Learning, concerns the repartition  of a   group of data  points into subgroups, in such a way that the elements within a same group are more similar to each other than they are to the elements of a different one. Clustering methods can be used either to infer  specific patterns in rough data sets,  or as a preliminary   analysis in supervised Machine Learning algorithms (for a review of  the applications of cluster  analysis, see \cite{Bishop,sax}).
Algorithms for Cluster Analysis are generically divided into two classes: hard clustering methods, whose prototype is the K-means algorithm and its variants, for which each data point belongs only to a single subgroup \cite{bb}; soft clustering methods, such as the  Expectation-Maximization and  the Fuzzy K-means  algorithms, for which each data point has a certain  degree or probability to belong to each cluster \cite{bilmes,Miyamoto}.\par
Most of the mathematical literature on  Cluster Analysis, and more in general on Machine Learning, is in the framework of finite-dimensional   optimization. But recently,   different approaches based on partial differential equations   and infinite dimensional optimization   have begun to be pursued \cite{chaudari,chen,e}. Even if, in most of the cases, it is not clear if these new approaches are computationally competitive with the   traditional ones, they offer new points of view  and insights in this rapidly developing field. \par
In this paper, we propose an approach to Cluster Analysis based on   Mean Field Games (MFG in short). This theory was introduced by Lasry and Lions \cite{Lions3} in the framework of differential games with a large number of rational agents. Although it was realized from the beginning  that, in perspective,  one of the most promising application of MFG theory could be  the  ``Big Data" analysis \cite{Lions_Paris_Princeton
}, up to now most of the applications  are in the fields of Economics and Financial Mathematics, and   only very few papers deal with data analysis problems  \cite{pequito, vassilaras}. Nevertheless, a  MFG version of the classical hard-clustering K-means algorithm has been recently    proposed in  \cite{coron}. Relying on a similar approach, here we deal with  a MFG version of the soft clustering Expectation-Maximization algorithm.  \par
As in the classical Expectation-Maximization (EM in short) algorithm, the starting point of our analysis is  a finite mixture model given by convex combinations of probability density functions  (PDFs in short) 
$$m(x)=\sum_{k=1}^K \a_k m_k(x), \ x \in \mathbb{R}^d, \quad \text{with}\quad\sum_{k=1}^K \a_k=1.$$ 
Finite mixture models are powerful probabilistic techniques  to describe complex scenarios, e.g. several regions with high mass density,  and   populations composed by different sub-populations, each  one represented by a PDF. Generally, the components of the mixture are assumed to be parametrized PDFs. For example, for a  Gaussian mixture model, the parameters are represented by the mean and the  variance  of each component. Then, the EM algorithm is a technique which allows to compute the unknown coefficients (weights and parameters) of the mixture by incrementally increasing the maximum likelihood   with respect to the data set.\par 
In our approach, we are given a PDF $f$ on $\R^d$ representing the data set, and we aim to find  a corresponding mixture model in order to subdivide the points in $K$ clusters. We interpret the data points as agents  belonging  with a certain probability   to one of the   $K$  sub-populations, which are described by a PDF $m_k$ with   appropriate weights $\a_k$ on the entire population. 
The similarity, or proximity,  among the members of a same sub-population  is encoded in the  cost functions of the optimal control problems for each sub-population, which 
 push the agents to aggregate around the closer  barycentre of the  distributions $m_k$.  \par
In order to characterize the components of the mixture model, i.e. the PDFs $m_k$ and the weights $\a_k$, we first consider the  case of a quadratic MFG system
\begin{equation}\label{MFG_intro}
\left\{
\begin{array}{ll}
-\e  \Delta u_k(x)+ \half |Du_k(x)|^2+\l_k=\half (x-\mu_k)^t(\Sigma_k^{-1})^t\Sigma_k^{-1}(x-\mu_k)  ,\,& x\in\R^d, \\[8pt]
\e  \Delta m_k(x)+\diver (m_k(x)Du_k(x) )=0,&x\in\R^d, \\[8pt]
\a_k=\int_{\R^d }\g_k(x) f(x)dx, \ \\[8pt] 
m_k\ge 0,\,\io m_k(x)dx=1,    u_k( \m_k)=0,
\end{array}
\right.
\end{equation}
for $k=1, \dots, K$, where the coupling among the different populations is encoded in the right hand side of the first equation. Indeed, the mean $\mu_k$ and the covariance matrix  $\Sigma_k$ of the population $m_k$ depend, besides  data set $f$,  on the whole measure $m$ through the quantities $\gamma_k(x)=\a_k m_k(x)/m(x)$. The latter play a central role in the model because they represent the degree of membership of agents to a given population $k$.
We introduce a notion of consistency with the data set, and we show that a solution of the multi-population MFG system \eqref{MFG_intro} is given by a consistent Gaussian mixture with the same mean and variance of $f$. Moreover, any consistent Gaussian mixture can be obtained as a solution of the MFG system.
We remark that the main difference between our approach and the classical   EM algorithm, which is an iterative procedure, is that the MFG method    gives the partition into clusters as the result of the solution of a stationary system of PDEs.\par
Then we consider a     stationary multi-population  MFG system,  where the Hamiltonian and the coupling terms can be very general, and we show that it always admits a solution. Note that, in this framework, it is not reasonable  to obtain uniqueness, since   there may be several admissible partitions of a given data set.
We also consider a two-step numerical approximation of the MFG system, inspired to the classical EM algorithm, and we test the method on discrete and continuous data sets.\par
The paper is organized as follows. In Section \ref{sec:Algorithm}, following \cite {Bishop,bilmes}, we briefly introduce the  K-means and the Expectation Maximization algorithms, in their classical approach. In Section \ref{sec:quadraticMFG}, we first describe the approach in \cite{coron} and then we propose  a quadratic  multi-population MFG model for   Gaussian finite mixture models. In Section \ref{sec:general_problem}, we  study a  general class of MFG systems for Cluster Analysis. Finally, in Section \ref{sec:Numericalanalysis}, we perform some numerical tests to confirm the theoretical results.
%

\section{Clustering Algorithms}\label{sec:Algorithm}
In this section, we briefly review some classical algorithms for  Cluster Analysis. We start by describing the    {\it K-means algorithm},    a hard clustering technique,  hence     the {\it Expectation-Maximization algorithm}, a soft clustering one. For a complete introduction to the subject, we refer to    \cite[Cap. 9]{Bishop}.\\
In   the following, we are given a set   of observable data $\mathcal{X}=\{x_1, \dots, x_I\}$ in $\R^d$ and a fixed number $K$ of clusters $S_j, \ j=1, \dots, K$, into which we want to partition our observations.\par
\subsection{K-Means algorithm}
The K-means algorithm is an iterative procedure that assigns points to the nearest cluster with respect to the Euclidean  distance, even though other distances can be considered. We denote by   $\mu= \{\mu_1, \mu_2, \dots, \mu_K\}$, $\mu_k \in \R^d$,    the vector of \textit{barycentres} of the clusters $\{S_1,S_2,...,S_K\}$. Moreover, we introduce  a vector of cluster assignment, $c =(c_{1}, c_{2}, \dots, c_{I})$,  $c_i \in \{1, \dots K \}$, such that for every vector $x_i \in \mathcal{X}$, we have   $c_i= k \Leftrightarrow x_i \in S_k.$\\
The goal is to find the vectors  $ \mu$ and $c$   in order to minimize the functional 
\begin{equation}\label{i_funckmean}
J=\sum_{i=1}^{I} \sum _{k=1}^{K}{\1_{\{c_{i}=k\}}}\vert  x_i-\mu_k \vert^2\,,
\end{equation}
representing the sum of the distances of the data points to the corresponding barycentres ($\1$ denotes the characteristic function).
Starting from an arbitrary assignment for the vector $ \mu^0$,   the \mbox{$n^{th}$} iteration of the K-means algorithm alternates two steps:
\begin{itemize}
	\item[]\textbf{Cluster assignment}: 
	Minimization of the objective function $J$ with respect to $c$ by assigning the point $x_i$ to the closest barycentre, i.e.
	\begin{equation*}
	c^{n}_i= {\arg\min}_j \vert  x_i -\mu^{n}_j\vert^2 \qquad \forall i=1, \dots, I.
	\end{equation*}
\item[]\textbf{Barycentre update}:
	Computation of the new barycentres   $ \mu^{n+1}_k$ of the cluster $S^n_k=\{x_i\in\mathcal{X}:\,c^n_i=k\}$, i.e.   
	\begin{equation*}
	\mu^{n+1}_k= \dfrac{\sum_{i=1}^{I} x_i\1_{\{c^n_{i}=k\}}}{\sum_{i=1}^{I}\1_{\{c^n_{i}=k\}}}= \dfrac{\sum_{i=1}^{I} x_i \1_{\{c^n_{i}=k\}}}{\text{card}(S^n_k)} \qquad \forall k=1, \dots, K.
	\end{equation*}
\end{itemize} 
At each iteration, the objective function $J$ decreases and the algorithm  is repeated until there is no change in the cluster assignments, up to a tolerance error. \par
\subsection{Finite mixture model and EM algorithm}
Mixture models are probabilistic techniques used to represent   complex scenarios of data, and they are
related to soft clustering methods which consider a probabilistic partition of the data points into clusters, whose corresponding parametrized probabilities  are  computed via the EM   algorithm.
In this setting, we assume that  the data set $\mathcal{X}$ represents a set of (independent and identically distributed) observations of a random variable $X$, with unknown distribution that we aim to approximate with a distribution $P$. Moreover, we assume that $P$ is   given by a mixture of probability densities $p_k(x|\theta_k)$,  depending on some parameters $\theta_k$,  with  mixing coefficients $\a_k$ such that $\sum_{k=1}^K\a_k=1$, i.e. 
\begin{equation}\label{i_mixture}
P(x)= \sum_{k=1}^{K} \a_k p_k(x | \theta_k).
\end{equation}
The  purpose is to compute the parameters  $\a=(\a_1,\dots,\a_k)$, $\th=(\th_1,\dots,\th_k)$ of the mixture model \eqref{i_mixture} in order to maximize the likelihood objective function 
\begin{equation*}
\ln P(\mathcal{X}| \a, \theta)= \sum_{i=1}^{I}\left(\ln \sum_{k=1}^{K}\a_k p_k(x_i| \theta_k)\right) 
\end{equation*} 
with respect to data set $\mathcal{X}$. 
To simplify the computation of the extrema of the previous functional,  the data set $\mathcal{X}$ is considered as incomplete, and it is introduced a random variable $\mathcal{Y}= \{y_i\}_{i=1}^{I}$ whose values $y_i$ specify the component of $P$  generating  the  point, i.e. $y_i\in \{1, \dots, K\}$ and $y_i= k$ if and only  if the  point $x_i$ has  been generated by the distribution $p_k$. We define the   responsibility of $x_i$ with respect to the k-th cluster as
	\begin{equation}\label{i_resp}
	\gamma_{k}(x_i):=p_k(y_i=k| x_i, \theta_k),
	\end{equation}
	i.e. as the probability of an assignment $y_i$ once we have observed $x_i$;  the EM algorithm aims to find a maximum of the expected value of the complete data log-likelihood functional
	\begin{equation}\label{i_ML}
	\mathbb{E}_{\mathcal{Y}}[\ln p(\mathcal{X, Y}|  \theta_k, \alpha_k )]= \sum_{i=1}^{I}\sum_{k=1}^{K}\g_{ k}(x_i)\ln (\a_k p_k(x_i| \theta_k) )
	\end{equation}
	(see formula (9.40) in \cite{Bishop}). 
It takes a particular simple and explicit form if we consider  parametric distributions $p_k(x| \theta_k)$ given  by Gaussian distributions $\cN(x|\m_k,\Sigma_k)$. Starting from an arbitrary initialization    $\mu^0_k$, $\Sigma^0_k$, $\a_k^0$, $k=1, \dots, K$, and  computing the optimality conditions for the functional \eqref{i_ML} with respect to $\g_k$ for $\a_k,\m_k,\Sigma_k$ fixed,  and then exchanging the roles,  we get the following two alternating steps: 
\begin{itemize}
	\item[]\textbf{E-step}:	 Given $\mu^{n-1}_k$ $\Sigma^{n-1}_k$, $\a^{n-1}_k$, $k=1, \dots, K$,  compute the posterior probability (responsibility) of a datum $x_i$ to belong to  the  cluster $S_k$
	\begin{equation*}\label{i_EM_resp}
	\g^n_{k}(x_i)= P(y_i=k| x_i, \mu^{n-1}_k, \Sigma^{n-1}_k)=\dfrac{\a^{n-1}_k \mathcal{N}(x_i| \mu^{n-1}_k, \Sigma^{n-1}_k)}{\sum_{j=1}^{K}\a^{n-1}_j \mathcal{N}(x_i| \mu^{n-1}_j, \Sigma^{n-1}_j)}.
	\end{equation*}
	
	\item[]\textbf{M-step}:  
	Update the parameters $\a$, $\m$, $\Sigma$, by setting 	for  $k=1, \dots, K$,
	\begin{align}
	&\a^{n}_k= \dfrac{\sum_{i=1}^I \g^n_{k}(x_i)}{I}, \nonumber\\[3pt]
	 &\mu^{n}_k= \dfrac{\sum_{i=1}^{N} x_i\g^n_{k}(x_i)}{  \sum_{i=1}^I \g^n_{k}(x_i)},\label{i_em_mean}\\[3pt]
	 &\Sigma^{n}_k= \dfrac{\sum_{i=1}^{N}\g^n_{k}(x_i)(x_i-\mu^{n}_k)(x_i-\mu^{n}_k)^t}{ \sum_{i=1}^I \g^n_{k}(x_i)}. \label{i_em_var}
	\end{align}

\end{itemize}
After the convergence of the algorithm under some appropriate stopping criterion, we interpret the responsibility $\g_k(x_i)$ as the probability that $x_i$ belongs to the cluster $S_k$.\par
It is  interesting to observe that the K-means algorithm can be derived as a limit of the EM algorithm when the variances of the mixture model \eqref{i_ML}   go to zero. 
Indeed, assume for simplicity that the covariance matrix is  $\Sigma_k= \sigma I$, $ k=1, \dots, K$ , where $I$ is the identity $d\times d$ matrix  and  $\sigma >0$ a constant. In this case, the  responsibilities in \eqref{i_resp} are given by
	\begin{equation*}
	\g_{ k}(x_i)= \dfrac{\a_k e^{-\frac{\vert  x_i-\mu_k  \vert^2}{2 \sigma }}}{\sum_{j=1}^{K}\a_j e^{-\frac{ \vert x_i-\mu_j \vert^2}{2 \sigma}}}.
	\end{equation*}
 For  $\sigma \rightarrow 0$, the responsibility  $\g_{k}(x_i)$, corresponding to index $k$   minimizing the distance of the barycentre $\m_k$ from $x_i$, tends to $1$, while,  for $j\neq k$, all the other responsibilities $\g_{j}(x_i)$  go to $0$. 
Hence, at least formally,     $\g_{k}(x_i)\rightarrow \1_{\{c_i=k\}}$ and the maximum likelihood functional \eqref{i_ML} tends, up to a constant, to the K-means functional $J$ defined in \eqref{i_funckmean} (see  \cite[Sec. 9.3.2]{Bishop}).

\section{MFG models for Cluster Analysis}\label{sec:quadraticMFG}
In this section, we describe some models for Cluster Analysis based on MFG theory. In the classical MFG theory (see \cite{Lions3}), we have a continuum of indistinguishable agents whose  dynamics,    driven by independent Brownian motions $W_t$, is given by
\begin{equation}\label{mfg_dynamics}
\begin{cases}
d X_t= a_tdt + \sqrt{2\epsilon} d W_t,\qquad t>0 \\
X_0= x.
\end{cases}
\end{equation}
The function $a_t$ represents  the control which an agent chooses in order to minimize a cost function $J$, to be specified according to the considered model. Most of the time, the cost $J$ comprises a term $L(x,a)$    depending on the current state $x$ and control $a$ of the agent,  and a term $F(x,m)$ depending on the distribution $m$ of the other agents. For example,     consider the long time average cost
\begin{equation*}\label{mfg_ergodic}
J(x,a)= \lim_{T\to +\infty} \dfrac{1}{T} \mathbb{E}_x \bigg\{\int_{0}^{T}\big[L(X_s,a_s)+F(X_s,m(X_s))\big]ds \bigg\},
\end{equation*}
where $m$ is a density function representing the distribution of the agents.
Let $H(x,p)=\sup_{q\in\R^d}\{pq-L(x,q)\}$ be the Hamiltonian function. Then the triplet $(u,m,\l)$, given by the ergodic cost  $\l$, the corrector $u$ and the   distribution of the agents $m$ corresponding to the choice of the optimal control $D_pH(x,Du)$, can be characterized as  a solution of the   MFG system
\begin{equation}\label{mfg_MFG}
\left\{
\begin{array}{ll}
-\e \Delta u (x)+ H (x,Du (x))+\l=F (x,m ),\quad& x\in\R^d, \\[6pt]
\e  \Delta m (x)+\diver (D_pH(x,Du(x))m (x)  )=0,&x\in\R^d, \\[6pt]
m \ge 0,\ \io m (x)dx=1,\ \int_{\mathbb{R}^d}u (x)dx=0. 
\end{array}
\right.
\end{equation}
The first equation is a Hamilton-Jacobi-Bellman equation for the couple $(\l,u)$; the second is a Fokker-Planck equation for the distribution $m$,  which provides the density   of the population  at the position $ x \in \mathbb{R}^d$;   the normalization
conditions are imposed    since   $u$ is defined up to a constant, and   $m$ must represent a probability density function. \par
In some problems, as the ones we consider, several populations of agents, homogeneous within a given group but with different objectives and preferences from one group to the other,  can interact in the same environment. In this case, the model can be described by a   multi-population MFG system, where, for  $k=1, \dots, K$, each population  satisfies
a system such as \eqref{mfg_MFG}, but with 
 the cost function  $F_k$   depending on $m_k$ and  also on the distributions $m_j$, $j\neq k$ of the other populations (see \cite{cirant,Lions3}).\par
We   describe two approaches to Cluster Analysis based on  MFG theory, which correspond respectively to the K-means algorithm and to the EM algorithm. We consider  a data set   given by the support of a measure with density function $f: \R^d\rightarrow \R$ such that
\begin{equation}\label{mfg_density}
   f\ge 0  ,\quad  \int_{\R^d} f(x)d x =1.
\end{equation}
Note that the data set $\mathcal{X}$ in Section \ref{sec:Algorithm} corresponds to the case  of an atomic measure
\begin{equation}\label{atomic_meas}
P(x)= \frac{1}{I}\sum_{i=1}^{I}\delta_{x_i}(x).
\end{equation}
 The  function $f$ describes the entire population of agents/data points, that we want to subdivide in    $K$   sub-populations/clusters  on the basis of some similar properties or characteristics, where $K$ is fixed a priori.
Each   population is represented by a density function $m_k$ and the similarity or proximity will be expressed by means of an appropriate cost function. 
\subsection{The MFG K-means model}\label{subsec:coron}
A MFG approach   which mimics the K-means algorithm has been recently introduced in \cite{coron}.
Since this approach is the basis for the EM algorithm we will subsequently study, we  briefly describe it. It is worth noting that it is a hard clustering method, as   the classical K means algorithm, hence each data point can belong only to a single cluster $S_k$, $k=1,\dots,K$. \\
Denote by $m=(m_1,\dots,m_K)$ and  $u=(u_1,\dots,u_K)$ the vectors of the density functions and of the corresponding value functions. An agent of the $k$-th population moves according to the dynamics \eqref{mfg_dynamics}  and minimizes  the  infinite horizon cost functional
	\begin{equation*}
	u_k(x)=  \inf_{a_s}\,  \mathbb{E}_x \bigg[ \int_{0}^{\infty}\bigg(\half|a_s|^2+F_k(X_s,m_k(X_s))\bigg)e^{-\r s}ds\bigg] ,
	\end{equation*}
	where $\r>0$, the discount factor, is a fixed constant. The coupling cost is given by
	\begin{equation}\label{kmfg_cost}
	F_k(x,m_k)=\frac{1+\r}{2}|x-y_k|^2,
	\end{equation}
	where the barycentres $y_k$ are defined by
	\begin{equation}\label{kmfg_bary}
	y_k:=\dfrac{\int_{\R^d} xm_k(x)dx}{\int_{\R^d} m_k(x)dx}.
	\end{equation} 
Consider the multi-population MFG system for $k=1,\dots,K$,
		\begin{equation}\label{kmfg_MFG}
	\left\{
	\begin{array}{ll}
	\r u_k-\e \Delta u_k (x)+ \half |Du_k(x)|^2=F_k (x,m_k),\quad& x\in\R^d, \\[6pt]
	\r m_k(x)-\e  \Delta m (x)-\diver (Du_k(x) m_k (x)  )=\rho \tilde{f}_k &x\in\R^d.
	\end{array}
	\right.
	\end{equation}
where $\tilde{f}_k$ is a Gaussian distribution with mean
\begin{equation}\label{tilde_barycentres}
\tilde{y}_k= \frac{\int_{S_k}x f(x)dx}{\int_{S_k} f(x) dx}
\end{equation} and variance $\e$, and the cluster $S_k=S_k(u)$ related to $\tilde{y}_k$ is defined by
\begin{equation}\label{kmfg_cluster}
S_k=\{x\in\R^d:u_k(x)=\min_{j=1,\dots,K}u_j(x)\}.
\end{equation}
The value function $u_k$ is intended as a measure of the distance from the barycentre $y_k$.	We refer to \cite{coron} for a detailed interpretation of the system. \\
 A family of vectors $\{\bar{y}_k\}_{k=1}^K$   is said to satisfy the self consistency rule
 with the data set $f$ if
	\begin{equation}\label{kmfg_regle}
	\bar{y}_k=\dfrac{\int_{S_k}xf(x)dx}{\int_{S_k}f(x)dx}
	\end{equation}
for a family of clusters $\{S_k\}_{k=1}^K$.\\  
	In \cite{coron}, the following two crucial results are proved:
	\begin{itemize}
		\item[(i)] For all the family of vectors   $\{\bar y_k\}_{k=1}^K$  satisfying the self consistency rule \eqref{kmfg_regle}  for some family of clusters $\{S_k\}$, there exists a solution $(u,m)$ of \eqref{kmfg_MFG} such that $\bar y_k=y_k$ with $y_k$ given by \eqref{kmfg_bary}. 
		\item[(ii)] Given a solution  $(u,m)$ of \eqref{kmfg_MFG}, then the barycentres $y_k$ given by  \eqref{kmfg_bary} satisfy the self consistency rule
		\eqref{kmfg_regle} for $S_k=S_k(u)$ defined in \eqref{kmfg_cluster}.
	\end{itemize}
	It is important  to observe that it is not reasonable to expect uniqueness of the solution to \eqref{kmfg_MFG}, since  there exist  in general several sets of barycentres $\{\bar y_k\}_{k=1}^K$ satisfying \eqref{kmfg_regle}. This non-uniqueness property can be interpreted as the convergence to a local minimum in the classical K-means algorithm. 

\subsection{The Gaussian MFG EM model}
The aim is to find,  by means of an appropriate multi-population MFG system, a mixture of $K$ density functions    which gives an  optimal representation of a data set, described by a density function $f$ as in \eqref{mfg_density}. The resulting method is soft-clustering, since each point/agent has a certain probability to belong to each one of the clusters. \\
In this section, in order to explain the method in a simple setting,  we will describe a specific model for a Gaussian mixture, while the general method will be described in Section \ref{sec:general_problem}.
\\
We introduce some preliminary notations. Given a mixture \begin{equation}\label{emmfg_mix}
	m(x)=\sum_{k=1}^K \a_k m_k(x)
\end{equation}
where $\a_k\in (0,1)$, $\sum_{k=1}^K\a_k=1$ and
$m_k\ge0$, $\int_{\R^d}m_kdx=1$,
we introduce the   responsibilities
\begin{equation}\label{emmfg_resp}
	\g_k(x)=\dfrac{\a_k m_k(x)}{m(x)},\qquad x\in\R^d,\, k=1,\dots,K.
\end{equation}
We also define 
\begin{align}
	&\mu_k=\frac{\int_{\R^d}x\gamma_{k}(x)f(x)dx}{\int_{\R^d} \gamma_{k}(x)f(x)dx},\label{emmfg_mean}\\[12pt]
	&\Sigma_k=\frac{\int_{\R^d}(x-\mu_k)(x-\mu_k)^t\gamma_{k}(x)f(x)dx}{\int_{\R^d} \gamma_{k}(x)f(x)dx}\label{emmfg_var}
\end{align} which are the mean $\mu_k \in \mathbb{R}^d$ and the covariance matrix $\Sigma_k \in \mathbb{R}^{d \times d}$ of the density function $m_k$ computed with respect to the data set $f$.
Since, as we will see in the rest of the section, the function $m$ is given by   a convex combination of Gaussian functions with positive weights $\alpha_k$, then $\gamma_k>0$ in $\mathbb{R}^d$ and therefore also $\mu_k$ and $\Sigma_k$ are well defined. 
\begin{remark}
	Consider a finite data set $\mathcal{X}=\{x_1, \dots, x_I\}$  and the corresponding atomic measure $P$ defined in \eqref{atomic_meas}. Then, replacing the measure $f(x)dx$  in \eqref{emmfg_mean}-\eqref{emmfg_var} with $P$, we obtain
	\begin{align*}
		&\m_k=\frac{\sum_{i=1}^I \gamma_k(x_i)x_i}{\sum_{i=1}^I \gamma_k(x_i)}, \quad
		\Sigma_k=\frac{\sum_{i=1}^I \gamma_k(x_i)(x_i-\mu_i)(x_i-\mu_i)^t}{\sum_{i=1}^I \gamma_k(x_i)},
	\end{align*}
	which coincide with formulas \eqref{i_em_mean},\eqref{i_em_var} used in the Maximization step of the EM algorithm.
\end{remark}
Given  the same type of dynamics \eqref{mfg_dynamics}  for all the populations, an agent of the $k$-th population  wants to minimize the ergodic cost functional 
\begin{equation}\label{emmfg_ergodic}
	J_k(x,a)= \lim_{T\to +\infty}   \dfrac{1}{T}\mathbb{E}_x\left\{\int_{0}^{T}\bigg[\half|a_t|^2+F_k(X_s,m_k(X_s),m(X_s))\bigg]ds\right\},
\end{equation}
where
\begin{equation}\label{emmfg_cost}
	F(x,m_k, m)=\half (x-\mu_k)^t(\Sigma_k^{-1})^t\Sigma_k^{-1}(x-\mu_k)
\end{equation}
Note that the coupling among the various populations is given by the dependence of $\m_k$, $\Sigma_k$ on the responsibility $\g_k$, which in turn depends on the measure $m$.  Observe that $F_k$ is instead independent of $u_j$, $j=1,\dots,K$.
The coupling term   forces a generic   data point  to distribute with an higher probability    around the nearest mean point $\m_k$, since it penalizes the square of the distance from it, with an attenuation factor given by the covariance matrix $\Sigma_k$. \\
At a formal level, the  multi-population MFG system corresponding to the previous problem  is given   by 
\begin{equation}\label{se_MFG}
	\left\{
	\begin{array}{ll}
		-\e  \Delta u_k(x)+ \half |D u_k(x)|^2+\l_k= \half (x-\mu_k)^t(\Sigma_k^{-1})^t\Sigma_k^{-1}(x-\mu_k) ,\quad& x\in\R^d, \\[8pt]
		\e    \Delta m_k(x)+ \diver(m_k(x)D u_k(x) )=0,&x\in\R^d, \\[8pt]
		\a_k=\int_{\R^d }\g_k(x) f(x)dx,\\[8pt] 
		m_k\ge 0,\,\int_{\R^d} m_k(x)dx=1,    u_k( \m_k)=0,
	\end{array}
	\right.
\end{equation}
for   $k=1, \dots, K$, where $\g_k$, $\m_k$, $\Sigma_k$ are defined as in \eqref{emmfg_resp}-\eqref{emmfg_var}.\\
\noindent

\begin{definition}\label{d:consistency}
	We say that a Gaussian mixture  $m(x)=\sum_{k=1}^K \b_k\cN(x|\n_k,T_k)$, $x \in \mathbb{R}^d$,
	is consistent with the data set if  
	\begin{equation}\label{se_cons}
		\left\{
		\begin{array}{ll}
			\n_k=\dfrac{\int_{\R^d} x \g_k(x)f(x)dx}{\int_{\R^d}  \g_k(x)f(x)dx}\\[16pt]
			T_k=\e\,\dfrac{\int_{\R^d } (x-  \n_k)(x-\n_k)^t\g_k(x)f(x)dx}{\int_{\R^d } \g_k(x)f(x)dx},\\[16pt]
			\b_k=\int_{\R^d}  \g_k(x)f(x)dx
		\end{array} 
		\right.
	\end{equation}
	where the responsabilities $\g_k$ are defined as in \eqref{emmfg_resp} with $m_k(x)=\cN(x|\n_k,T_k)$.	
\end{definition}
Note that $\e$ can be considered as an additional parameter of the model, that we can tune to adjust the covariance matrix of the function $m_k$.
\begin{proposition}
	Let $\e=1$ and $m$ be a Gaussian mixture consistent with the data set $f$. Then $m$ and $f$ have the same mean and covariance, i.e. 
	\begin{eqnarray}
		&\int_{\R^d } x m(x) dx = \int_{\R^d } x f(x) dx \label{mean}\\
		&\int_{\R^d } (x- \mu)(x-\mu)^tm(x) dx = \int_{\R^d } (x- \mu)(x-\mu)^t f(x) dx \label{variance}
	\end{eqnarray} 
\end{proposition}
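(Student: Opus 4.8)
The plan is to reduce everything to two elementary facts: the responsibilities sum to one pointwise, $\sum_{k=1}^K\g_k(x)=1$ for all $x$ (immediate from \eqref{emmfg_resp}, with weights $\b_k$, and the definition of the mixture $m$), and each component $m_k(x)=\cN(x|\n_k,T_k)$ has known first and second moments, namely mean $\n_k$ and covariance $T_k$. No PDE information is needed; only the algebraic consistency relations \eqref{se_cons} are used.

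For the mean \eqref{mean}, I would compute $\io x\,m(x)\,dx=\sum_k\b_k\io x\,m_k(x)\,dx=\sum_k\b_k\n_k$, then invoke the consistency relation for $\n_k$ together with $\b_k=\io\g_k f\,dx$ to get $\b_k\n_k=\io x\,\g_k(x)f(x)\,dx$, and finally sum over $k$ and use $\sum_k\g_k\equiv1$ to obtain $\sum_k\b_k\n_k=\io x\,f(x)\,dx$. This step does not use $\e=1$.

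For the covariance \eqref{variance}, let $\mu$ be the common mean just found. Shifting each Gaussian second moment to $\mu$, one has $\io(x-\mu)(x-\mu)^t m_k\,dx=T_k+(\n_k-\mu)(\n_k-\mu)^t$, since the cross terms vanish as $m_k$ is centered at $\n_k$; hence the left side of \eqref{variance} equals $\sum_k\b_kT_k+\sum_k\b_k(\n_k-\mu)(\n_k-\mu)^t$. With $\e=1$, the consistency relation for $T_k$ and $\b_k=\io\g_k f\,dx$ give $\b_kT_k=\io(x-\n_k)(x-\n_k)^t\g_k f\,dx$. Substituting $x-\n_k=(x-\mu)-(\n_k-\mu)$, expanding the outer product into four terms, and integrating against $\g_k f\,dx$ — using $\io(x-\mu)\g_k f\,dx=\b_k(\n_k-\mu)$ (again from the $\n_k$-consistency relation) and $\sum_k\g_k\equiv1$ — collapses $\sum_k\b_kT_k$ to $\io(x-\mu)(x-\mu)^t f\,dx-\sum_k\b_k(\n_k-\mu)(\n_k-\mu)^t$. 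Adding back $\sum_k\b_k(\n_k-\mu)(\n_k-\mu)^t$ yields precisely $\io(x-\mu)(x-\mu)^t f\,dx$, which is \eqref{variance}.

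The whole argument is a calculation, so the only real obstacle is bookkeeping: in the covariance step one must carefully track the four matrices produced by expanding $(x-\n_k)(x-\n_k)^t$ after the substitution, and verify that the two cross terms and the rank-one term combine — via the $\n_k$-consistency relation — into $-\sum_k\b_k(\n_k-\mu)(\n_k-\mu)^t$, exactly cancelling the contribution coming from recentering the Gaussian moments. It is also worth flagging why $\e$ enters only here: the definition of consistency inserts a factor $\e$ in front of the empirical covariance in $T_k$, so $\e=1$ is precisely the choice that makes $\sum_k\b_kT_k$ reproduce the pooled within-cluster scatter and hence makes \eqref{variance} hold.
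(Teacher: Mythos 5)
Your proof is correct and follows essentially the same route as the paper's: both arguments are pure moment bookkeeping from the consistency relations \eqref{se_cons}, the identity $\sum_{k}\gamma_k(x)\equiv 1$, and the known first and second moments of each Gaussian component. The only cosmetic difference is that the paper passes through the uncentered second moment, first proving $\int_{\R^d} xx^t m\,dx=\int_{\R^d} xx^t f\,dx$ and then combining with \eqref{mean}, whereas you recenter everything at the common mean $\mu$ from the outset; the two expansions are algebraically equivalent.
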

\begin{proof}
	To prove \eqref{mean}, it suffices to apply the consistency conditions \eqref{se_cons} to   \eqref{emmfg_mix} and recall  that $\sum_{k=1}^{K}  \gamma_k (x) = 1$ to get 
	\begin{align*} 
		\int_{\R^d }xm(x)dx&=\sum_{k=1}^K\a_k\int_{\R^d}x m_k(x)dx=\sum_{k=1}^{K}\a_k\frac{\int_{\R^d} \,x \g_k(x)f(x)dx}{\int_{\R^d} \g_k(x)f(x)dx}\\
		&=\sum_{k=1}^{K}\int_{\R^d} \,x \g_k(x)f(x)dx  =\int_{\R^d }x f(x)dx.
	\end{align*} 
	To prove \eqref{variance}, we first claim that  
	\begin{equation}\label{eq_moment}
		\int_{\R^d } xx^t m(x) dx = \int_{\R^d} xx^t f(x) dx.
	\end{equation} 
	Indeed,   multiplying by $\alpha_k=\int_{\mathbb{R}^d}\g_k(x)f(x)dx $  the    consistency conditions \eqref{se_cons}, recalling  that $\sum_{k=1}^{K}  \gamma_k (x) = 1$ and summing over $k$, by the identity
	\[
	\sum_{k=1}^{K} \alpha_k \int_{\mathbb{R}^d}(x- \mu_k)(x-\mu_k)^t m_k (x) dx = \sum_{k=1}^{K} \alpha_k \dfrac{\int_{\R^d }(x - \mu_k)(x-\mu_k)^t \gamma_k (x) f(x) dx}{\int_{\mathbb{R}^d}\g_k(x)f(x)dx}, 
	\]
	we easily get 
	\begin{equation*}
		\int_{\R } xx^t m(x) dx - \sum_{k=1}^{K}\alpha_k \mu_k\mu_k^t= \int_{\R } xx^t f(x) dx - \sum_{k=1}^{K}\alpha_k\mu_k\mu_k^t \label{partialresul}
	\end{equation*}
	and therefore the claim.	
	Combining \eqref{eq_moment}  with the identity \eqref{mean}, we   get \eqref{variance}.
\end{proof}
In the next result, we show that if there exists a mixture   of  Gaussian  densities   consistent with the data set $f$,  then it solves   \eqref{se_MFG} for appropriate   $(u_k, \l_k)$, $k=1,\dots,K$.

\begin{proposition}\label{p:cons_MFG}
	Let $m$ be a mixture of Gaussian densities, i.e. $m(x)=\sum_{k=1}^K \b_k \cN(x|\nu_k, T_k)$  for $x \in \mathbb{R}^d$, consistent with the data set $f$.
	Then,   the family of quadruples  $(u_k, \lambda_k, m_k, \b_k)$, $k=1,\dots,K$, with 
	\begin{align*}
		&u_k(x)=\frac{\e}{2}(x-\m_k)^tT^{-1}_k(x-\m_k),\\
		&\lambda_k=\e^2 \Tr(T_k^{-1}),\\
		&m_k(x)=\cN(x|\m_k,T_k)\\
		&\b_k=\int_{\R^d}\gamma_k(x)f(x)dx
	\end{align*} 
	is a  solution of \eqref{se_MFG}.
\end{proposition}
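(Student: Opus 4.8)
The plan is to verify directly that the listed quadruples satisfy, line by line, the multi-population system \eqref{se_MFG}. The one point requiring care is that the data $\mu_k,\Sigma_k$ on the right-hand side of the Hamilton--Jacobi equation in \eqref{se_MFG} are not free parameters but are reconstructed from the unknowns $(\a_k,m_k)$ through \eqref{emmfg_resp}--\eqref{emmfg_var}; so I would first check that, once the candidate solution is substituted, these reconstructed quantities return exactly the parameters $\nu_k,T_k$ of the given consistent mixture.

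First I would pin down the coupling data. Put $\a_k:=\b_k$ and $m_k:=\cN(\cdot\,|\,\nu_k,T_k)$. Since the first line of \eqref{se_cons} states that the $f$-weighted mean of $m_k$ equals $\nu_k$, we have $m_k=\cN(\cdot\,|\,\mu_k,T_k)$ with $\mu_k$ as in \eqref{emmfg_mean}, and the mixture $\sum_k\a_k m_k$ is precisely the given $m$; hence the responsibilities $\g_k=\a_k m_k/m$ of \eqref{emmfg_resp} coincide with those of Definition \ref{d:consistency}. Feeding them into \eqref{emmfg_mean}--\eqref{emmfg_var} and using \eqref{se_cons} then gives $\mu_k=\nu_k$ and $\Sigma_k=\e^{-1}T_k$, i.e.\ $\Sigma_k^{-1}=\e\,T_k^{-1}$. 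Moreover $\sum_k\g_k\equiv1$ forces $\sum_k\a_k=\sum_k\int\g_k f=1$ with each $\a_k=\int\g_k f\in(0,1)$, since $\g_k>0$; so the weights are admissible, $\Sigma_k$ is positive definite, and the third line of \eqref{se_MFG} holds by construction.

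It then remains to check the two PDEs and the side conditions, which I expect to be routine. For the Fokker--Planck equation, observe that $u_k=-\e\log m_k+\mathrm{const}$, equivalently $m_k Du_k=-\e\,Dm_k$; taking divergences gives $\e\Delta m_k+\diver(m_k Du_k)=0$ at once (this is the Hopf--Cole structure of the quadratic Hamiltonian, whose stationary Kolmogorov equation is solved by the Gibbs density $m_k\propto e^{-u_k/\e}$). Since $m_k$ is a Gaussian density it is nonnegative with unit mass, and $u_k(\mu_k)=0$ is immediate, which settles the last line of \eqref{se_MFG}. For the Hamilton--Jacobi equation I would differentiate the quadratic $u_k(x)=\tfrac{\e}{2}(x-\mu_k)^tT_k^{-1}(x-\mu_k)$: with $T_k$ symmetric one gets $Du_k=\e\,T_k^{-1}(x-\mu_k)$, $\Delta u_k=\e\,\Tr(T_k^{-1})$ and $|Du_k|^2=\e^2(x-\mu_k)^tT_k^{-2}(x-\mu_k)$, so that $-\e\Delta u_k+\tfrac12|Du_k|^2+\lambda_k=\tfrac{\e^2}{2}(x-\mu_k)^tT_k^{-2}(x-\mu_k)$ precisely when $\lambda_k=\e^2\Tr(T_k^{-1})$; recalling $\Sigma_k^{-1}=\e\,T_k^{-1}$, hence $(\Sigma_k^{-1})^t\Sigma_k^{-1}=\e^2T_k^{-2}$, this is exactly the required right-hand side.

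I expect the only genuine obstacle to be the self-referential bookkeeping in the identification step: because $\mu_k,\Sigma_k$ in \eqref{se_MFG} are built out of the unknowns, one must check that inserting $(\b_k,\cN(\cdot\,|\,\nu_k,T_k))$ reproduces the parameters of the consistent mixture, i.e.\ that the consistency relations \eqref{se_cons} are exactly the fixed-point equations of that insertion. Everything else is then elementary Gaussian calculus, driven by the fact that the proposed $m_k$ is proportional to $e^{-u_k/\e}$.
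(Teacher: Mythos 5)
Your proof is correct and takes essentially the same route as the paper's: direct substitution of the Gaussian components and the quadratic $u_k$ into the system, using the consistency relations \eqref{se_cons} to identify $\mu_k=\nu_k$ and $\Sigma_k^{-1}=\e\,T_k^{-1}$, so that the coupling term matches $\tfrac{\e^2}{2}(x-\mu_k)^tT_k^{-2}(x-\mu_k)$ and $\lambda_k=\e^2\Tr(T_k^{-1})$. The only difference is cosmetic: you dispatch the Fokker--Planck equation in one line via the Gibbs relation $m_k\propto e^{-u_k/\e}$, whereas the paper computes the Gaussian derivatives explicitly and solves for the parameters of a quadratic ansatz.
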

\begin{proof} 
	Consider a component of the mixture $m_k$ which is a Gaussian density function $\cN(x|\nu_k,T_k)= \dfrac{1}{(2 \pi)^{\frac{d}{2}}\vert T_k \vert^{\half}}e^{-\half (x-\nu_k)^tT_k^{-1}(x-\nu_k)}$. Recall that $(x-\nu_k) \in \R^d$ and $T_k^{-1} \in \mathbb{R}^{d \times d}$ is the inverse of the covariance matrix $T_k$. 
	We have 
	\begin{equation*}
	\begin{cases}
	D m_k(x)&= -m_k(x) T_k^{-1}(x-\nu_k)\\
	D^2m_k(x)&=m_k(x)\big[ (T_k^{-1}(x-\nu_k))(T_k^{-1}(x-\nu_k))^t- T_k^{-1}\big]\\
	\Delta m_k(x) &= m_k(x)\big[(T_k^{-1}(x-\nu_k))^t(T_k^{-1}(x-\nu_k))-\Tr(T_k^{-1})\big]
	\end{cases}
	\end{equation*}
	 where $x \in \mathbb{R}^d$, $D^2$ denotes the Hessian and $\Tr$ the trace. 
	Furthermore, given a matrix $B_k \in \mathbb{R}^{d \times d}$ and assuming that 
	\begin{equation*}
		u_k(x)=\half (x-\rho_k)^tB_k(x-\rho_k),
	\end{equation*}
	we have
	$D u_k(x)= B_k(x-\rho_k)$ and $	\Delta u_k(x)=\Tr B_k$.
	Substituing $m_k$ and $u_k$ as above in the second equation  of \eqref{se_MFG}, we get
	\[
	\e \big[(T_k^{-1}(x-\nu_k))^t(T_k^{-1}(x-\nu_k))-\Tr(T_k^{-1})\big]-(T_k^{-1}(x-\nu_k))^t(B_k(x-\rho_k)) + \Tr(B_k))=0
	\]
	which is satisfied for $\rho_k=\nu_k$ and $B_k= \e T_k^{-1}$. Hence we get 
	\begin{equation}\label{se_u}
		u_k(x)= \dfrac{\e}{2}(x-\nu_k)^tT_k^{-1}(x_k-\nu_k) +C_k
	\end{equation}
	for some $C_k\in\R$. 
	Replacing  	$u_k$, given by \eqref{se_u},  in   the Hamilton-Jacobi equation in \eqref{se_MFG},   we have  
	\[
	-\e^2 \Tr (T_k^{-1})+ \dfrac{\e^2}{2}\vert (x-\nu_k)^t T_k^{-1}(x-\nu_k)\vert +\lambda_k=F_k(x, m_k, m)
	\]
	Recalling that $F_k(x, m_k, m)= \half (x-\mu_k)^t(\Sigma_k^{-1})^t\Sigma_k^{-1}(x-\mu_k)$ and that, by \eqref{se_cons},  
	$\n_k=\m_k$ and $T_k=\e \Sigma_k$, we get that the previous equation is satisfied for
	\[
	\l_k=\e^2 \Tr(T_k^{-1}). 
	\]	
	Taking into account the normalization condition $ u_k(\m_k)=0$, we finally get $C_k=0$ and then the result.
\end{proof}
We now prove a reverse result, i.e. a solution of the MFG system \eqref{se_MFG} gives a mixture of Gaussian densities consistent with the data set $f$.\\

\begin{proposition}\label{t:MFG_cons}
	Let $\{(u_k,\l_k,m_k,\a_k)\}_{k=1}^K$ be a solution of the  MFG system \eqref{se_MFG}. Then $m(x)= \sum_{k=1}^{K}\alpha_k m_k(x), \ x \in \mathbb{R}^d$ is a Gaussian mixture consistent with the data set $f$.
\end{proposition}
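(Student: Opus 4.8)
The plan is to use the two PDEs in \eqref{se_MFG} in turn to force the Gaussian form of each $m_k$, and then to recognize the relations of Definition~\ref{d:consistency} as mere restatements of the data defining the solution.

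First I would exploit the Fokker--Planck equation. Rewriting the second equation of \eqref{se_MFG} as $\diver\!\big(\e Dm_k+m_k Du_k\big)=0$ and setting $\phi_k:=e^{u_k/\e}m_k$, a direct computation gives $\e Dm_k+m_k Du_k=\e\,e^{-u_k/\e}D\phi_k$, so that $\diver\!\big(e^{-u_k/\e}D\phi_k\big)=0$. Multiplying by $\phi_k$ and integrating over $\R^d$ (the boundary terms at infinity vanishing thanks to the integrability of $m_k$ and the coercivity of $u_k$) yields $\io e^{-u_k/\e}|D\phi_k|^2\,dx=0$; hence $\phi_k$ is constant and
\[
m_k(x)=C_k\,e^{-u_k(x)/\e},\qquad C_k=\Big(\io e^{-u_k/\e}\,dx\Big)^{-1}.
\]
In particular $m_k>0$ on $\R^d$, so the responsibilities $\g_k$, the mean $\m_k$ and the covariance $\Sigma_k$ attached to the solution are well defined, with $\Sigma_k$ symmetric positive definite.

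Next I would use the Hamilton--Jacobi equation. Since the solution is now fixed, $\m_k$ and $\Sigma_k$ are fixed, so the first equation of \eqref{se_MFG} is an ergodic Hamilton--Jacobi--Bellman equation with a fixed quadratic right-hand side. Performing the Hopf--Cole change of unknown $u_k=-2\e\ln\psi_k$, i.e.\ $\psi_k=e^{-u_k/(2\e)}=\sqrt{m_k/C_k}$, the identity $-\e\Delta u_k+\tfrac12|Du_k|^2=2\e^2\,\Delta\psi_k/\psi_k$ turns the equation into the linear stationary Schr\"odinger equation
\[
-2\e^2\Delta\psi_k+\tfrac12\,(x-\m_k)^t(\Sigma_k^{-1})^t\Sigma_k^{-1}(x-\m_k)\,\psi_k=\l_k\,\psi_k .
\]
Because $\psi_k>0$ and $\psi_k\in L^2(\R^d)$ (as $m_k\in L^1$), $\psi_k$ is a positive $L^2$ eigenfunction of a Schr\"odinger operator with (anisotropic) harmonic potential, hence it must be the ground state, namely $\psi_k(x)\propto\exp\{-\tfrac{1}{4\e}(x-\m_k)^t\Sigma_k^{-1}(x-\m_k)\}$ with $\l_k=\e\,\Tr\Sigma_k^{-1}$; equivalently one may invoke the uniqueness, up to an additive constant, of the corrector of the ergodic problem and compare with the explicit quadratic solution produced in Proposition~\ref{p:cons_MFG}. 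Either way $u_k(x)=\tfrac12(x-\m_k)^t\Sigma_k^{-1}(x-\m_k)+c_k$, and the normalization $u_k(\m_k)=0$ forces $c_k=0$; therefore $m_k(x)=C_k e^{-u_k/\e}=\cN(x\,|\,\m_k,\e\Sigma_k)$.

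It then remains to read off the consistency relations. Put $\n_k:=\m_k$, $T_k:=\e\Sigma_k$, $\b_k:=\a_k$, so that $m=\sum_k\a_k m_k=\sum_k\b_k\cN(\cdot\,|\,\n_k,T_k)$ is a Gaussian mixture and the responsibilities $\g_k=\a_k m_k/m$ of Definition~\ref{d:consistency} coincide with those of the solution. Then the three identities in \eqref{se_cons} are exactly the data defining the solution: $\b_k=\a_k=\io\g_k f\,dx$ is the third equation of \eqref{se_MFG}; $\n_k=\m_k$ is \eqref{emmfg_mean}; and $T_k=\e\Sigma_k$ is \eqref{emmfg_var} rewritten with $\n_k$ in place of $\m_k$. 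Hence $m$ is a Gaussian mixture consistent with $f$, which is the claim. The one genuinely delicate point is the passage, in the third paragraph, from ``the HJB equation is solved by $u_k$'' to ``$u_k$ is the quadratic function'': this needs the a priori decay of $(u_k,m_k)$ used to integrate by parts, together with the characterization of the positive $L^2$ solution of the Schr\"odinger equation (equivalently, of the ergodic corrector) as the Gaussian ground state.
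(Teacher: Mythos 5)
Your proof is correct, but it travels the two equations of \eqref{se_MFG} in the opposite order from the paper and replaces its key external ingredient by a different one. The paper first treats the ergodic Hamilton--Jacobi equation: it invokes the uniqueness result of Bardi--Priuli \cite{bardi} for linear-quadratic ergodic problems to conclude directly that $u_k=\tfrac12(x-\mu_k)^t\Sigma_k^{-1}(x-\mu_k)$ and $\l_k=\e\Tr(\Sigma_k^{-1})$, and only then substitutes $Du_k=\Sigma_k^{-1}(x-\mu_k)$ into the Fokker--Planck equation and verifies that the normalized solution is $\cN(x|\mu_k,\e\Sigma_k)$. You instead extract from the Fokker--Planck equation the Gibbs/detailed-balance relation $m_k=C_k e^{-u_k/\e}$, and then use the Hopf--Cole substitution to turn the Hamilton--Jacobi equation into a Schr\"odinger eigenvalue problem whose positive $L^2$ eigenfunction must be the anisotropic harmonic-oscillator ground state; the $L^2$ membership of $\psi_k=\sqrt{m_k/C_k}$, inherited from $\int m_k=1$, is precisely what selects the Gaussian. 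What your route buys is self-containedness (the citation to \cite{bardi} is replaced by the standard ground-state characterization, and the relation $m_k\propto e^{-u_k/\e}$ is made explicit rather than hidden in a uniqueness claim for the Fokker--Planck equation); what it costs is the integration by parts over all of $\R^d$ in the first step, whose boundary terms you dismiss by appealing to decay of $m_k$ and coercivity of $u_k$ that are not known a priori --- you correctly flag this, and it is fair to note that the paper's own proof is formal at exactly the same points (uniqueness for the whole-space Fokker--Planck equation among probability densities, and invertibility/positive-definiteness of $\Sigma_k$, are likewise taken for granted). The final bookkeeping step, identifying \eqref{se_cons} with the defining relations \eqref{emmfg_mean}--\eqref{emmfg_var} and the third equation of \eqref{se_MFG} once $\n_k=\m_k$, $T_k=\e\Sigma_k$, $\b_k=\a_k$, matches the paper's conclusion.
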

\begin{proof}
	Firstly, fixed $k=1, \dots, K$,  consider the Hamilton-Jacobi equation
	\begin{equation}\label{se_HJ}
		-\e  \Delta u_k(x)+ \half |D u_k(x)|^2+\l_k= \half (x-\mu_k)^t(\Sigma_k^{-1})^t\Sigma_k^{-1}(x-\mu_k) 
	\end{equation}
	with the normalization condition $u_k(\mu_k)=0$.	
	By \cite{bardi},   the unique solution  of the previous equation  is given  by the couple $(u_k,\l_k)=(\half(x-\mu_k)^t\Sigma_k^{-1}(x-\mu_k),\e\Tr(\Sigma_k^{-1}))$. Replacing   $D u_k(x)=\Sigma_k^{-1}(x-\m_k)$ in the Fokker-Planck equation, we obtain  the equation
	\begin{equation}\label{se_FP}
		\e   \Delta m_k(x)+ D m_k(x) \Sigma_k^{-1}(x-\mu_k)+ m_k(x) \Tr(\Sigma_k^{-1})=0.
	\end{equation}
	Taking into account the normalization conditions $m_k\ge 0$, $\int m_kdx=1$, we have that the solution of
	\eqref{se_FP} is given by the Gaussian density	$m_k(x)=\cN(x|\n_k,T_k)$,
	where, by \eqref{se_FP}, the constants $\n_k,T_k$ have to satisfy  the identity
	\[ \e \big((T_k^{-1}(x-\nu_k))^t(T_k^{-1})(x-\nu_k)-\Tr(T_k^{-1})\big)+\]\[
	- (T_k^{-1}(x-\nu_k))^t(\Sigma_k^{-1}(x-\mu_k))+\Tr(\Sigma_k^{-1})=0 \]
	Hence $m_k$ is a solution of \eqref{se_FP} if $\nu_k=\m_k$, $T_k= \e\Sigma_k$.
	If we set $m(x)= \sum_{k=1}^{K}\alpha_k m_k(x)$, with $m_k(x)=\cN(x|\m_k,\e \Sigma_k)$ and $\a_k$ as in \eqref{se_MFG},  we get a Gaussian mixture  compatible with the data set $f$.
\end{proof}
Lastly, we state  an existence result for    solutions to \eqref{se_MFG} (since the proof is similar to the one of Theorem \ref{theorem}, we omit the details).
\begin{proposition}
	There exists a solution $\{u_k, \lambda_k, m_k, \alpha_k\}_{k=1}^{K}$ to \eqref{se_MFG}.
\end{proposition}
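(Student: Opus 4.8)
The plan is to reduce the statement to a finite-dimensional fixed-point problem and apply Brouwer's theorem. By Propositions \ref{p:cons_MFG} and \ref{t:MFG_cons}, a family $\{(u_k,\l_k,m_k,\a_k)\}_{k=1}^K$ solves \eqref{se_MFG} if and only if $m=\sum_k\a_k m_k$ is a Gaussian mixture consistent with $f$ in the sense of Definition \ref{d:consistency}, and in that case $u_k$, $\l_k$ and $m_k$ are explicitly determined by the parameters $(\a_k,\m_k,\Sigma_k)$ (so the Hamilton--Jacobi and Fokker--Planck equations never have to be solved again). Hence it suffices to produce parameters $(\b_k,\n_k,T_k)_{k=1}^K$ solving the consistency system \eqref{se_cons}, and I would look for them as a fixed point of the Expectation--Maximization-type map obtained by reading \eqref{se_cons} as an update rule.

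Concretely, I would first assume that $f$ has compact support $\overline{B_R}$ with $f$ bounded above and below by positive constants there (the general case being reduced to this by truncating $f$ and using that it has finite second moment). On the compact convex set
\[
\mathcal K=\Big\{(\b,\n,T):\ \b_k\ge0,\ \sum_{k}\b_k=1,\ |\n_k|\le R,\ \l_{\min}I\preceq T_k\preceq\l_{\max}I,\ k=1,\dots,K\Big\},
\]
with $\l_{\max}=4\e R^2$ and $\l_{\min}>0$ to be fixed, define $\Phi(\b,\n,T)=(\hat\b,\hat\n,\hat T)$ by first forming $m_k=\cN(\cdot\,|\,\n_k,T_k)$, $m=\sum_k\b_k m_k$ and $\g_k=\b_k m_k/m$, and then
\[
\hat\b_k=\io\g_k f\,dx,\qquad \hat\n_k=\frac{1}{\hat\b_k}\io x\,\g_k f\,dx,\qquad \hat T_k=\frac{\e}{\hat\b_k}\io(x-\hat\n_k)(x-\hat\n_k)^t\g_k f\,dx.
\]
A fixed point of $\Phi$ is exactly a consistent Gaussian mixture, hence by the reduction above a solution of \eqref{se_MFG}.

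The two things to check are that $\Phi$ maps $\mathcal K$ into itself and that it is continuous. The constraint on $\n$ and the upper constraint on $T$ are immediate: $\g_k f\,dx/\hat\b_k$ is a probability measure supported in $B_R$, so $\hat\n_k$ lies in the closed convex hull of $\supp f$, hence in $B_R$, and $\hat T_k\preceq\e(2R)^2I=\l_{\max}I$. For the lower bound one uses that, for $(\b,\n,T)\in\mathcal K$, each $m_k$ is bounded above and below by positive constants on $B_R$, so $\g_k$ is bounded below on $B_R$, so $\g_k f\,dx/\hat\b_k$ has density bounded below on $B_R$, which forces $\hat T_k\succeq\l_{\min}I$ once $\l_{\min}$ is small enough. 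Continuity of $\Phi$ is clear because every operation involved — building Gaussians from parameters, forming the quotients $\g_k$, integrating against $f$ over the bounded set $B_R$ — is continuous, and the denominators $\hat\b_k$ stay positive on the relevant region. Brouwer's theorem then produces a fixed point.

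The main obstacle is precisely the last point: near the boundary $\{\b_k=0\}$ of the simplex the responsibility $\g_k$ degenerates, the lower bound on $\g_k$ over $B_R$ collapses, and $\hat\n_k,\hat T_k$ may lose their a priori bounds, so $\Phi$ and the invariance of $\mathcal K$ are really only under control on the open simplex. I would deal with this by a regularization, replacing $\g_k$ with $\g_k^\eta=(\g_k+\eta)/(1+K\eta)$ for $\eta>0$ — equivalently, adding a small uniform background component to the mixture — so that the lower bounds, and hence $\l_{\min}$, become uniform in $\eta$ and $\Phi^\eta$ genuinely maps $\mathcal K$ continuously into itself; Brouwer gives a fixed point $(\b^\eta,\n^\eta,T^\eta)\in\mathcal K$ for each $\eta$, and compactness of $\mathcal K$ lets one pass to the limit $\eta\to0$ to get a fixed point of $\Phi$, i.e. a consistent Gaussian mixture, and thus a solution of \eqref{se_MFG}. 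Under a mild non-degeneracy assumption on $f$ (essentially that the data genuinely spread over $K$ regions) the limiting weights stay positive, so that all the associated $(u_k,\l_k,m_k,\a_k)$ are well defined. The whole argument runs parallel to, and is in fact simpler than, the fixed-point proof for the general MFG system of Section \ref{sec:general_problem} (Theorem \ref{theorem}), the simplification being that here the state equations are solved explicitly by Gaussians, so no elliptic-regularity estimates are needed.
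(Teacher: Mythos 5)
Your overall strategy is genuinely different from the paper's: the paper proves this proposition by referring to the Schauder fixed-point argument of Theorem \ref{theorem}, i.e.\ an infinite-dimensional fixed point on the pair $(\a,m)$ in which the Hamilton--Jacobi--Bellman and Fokker--Planck equations are solved at each iteration and elliptic estimates give the compactness. Your reduction, via Propositions \ref{p:cons_MFG} and \ref{t:MFG_cons}, to the finite-dimensional consistency system \eqref{se_cons} followed by Brouwer is a legitimate and attractive alternative in the Gaussian setting. However, as written it has two genuine gaps, both located exactly where you flag ``the main obstacle,'' and the fixes you sketch do not close them.

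First, the invariance $\hat T_k\succeq\l_{\min}I$ is not established, and the justification offered is circular. The pointwise lower bound on $\g_k=\b_k m_k/m$ over $B_R$ that you invoke is of the form $\g_k\ge \b_k\,\inf_{B_R}m_k/\sup_{B_R}m$, and for $T_k\succeq\l_{\min}I$ the best available constant behaves like $(\l_{\min}/\l_{\max})^{d/2}e^{-cR^2/\l_{\min}}$; it \emph{degenerates} as $\l_{\min}\to0$. Consequently the resulting lower bound on the covariance of the reweighted measure $\g_k f\,dx/\hat\b_k$ is far smaller than $\l_{\min}$ when $\l_{\min}$ is small, so ``choosing $\l_{\min}$ small enough'' makes the inequality you need \emph{harder}, not easier, to satisfy. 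A finer analysis (the transition width of $\g_k$ scales like $\sqrt{\l_{\min}\log(1/\l_{\min})}$, and one must also handle configurations where two means $\n_j,\n_k$ nearly coincide near $\pd B_R$ so that $\g_k f$ concentrates on a thin slab) might rescue the invariance, but that quantitative work is the heart of the proof and is missing. Second, the limit $\eta\to0$: compactness of $\mathcal K$ gives a limit point, but $\Phi$ is discontinuous (indeed undefined, because of the division by $\hat\b_k$) on $\{\b_k=0\}$, so the limit of fixed points of $\Phi^\eta$ need not be a fixed point of $\Phi$. You patch this with ``a mild non-degeneracy assumption on $f$,'' but that is an extra hypothesis not in the statement, and no argument is given that the weights do not collapse. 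Note that the paper's formulation avoids this issue structurally by defining $\m_k,\Sigma_k$ through \eqref{hyp:1}--\eqref{hyp:2}, i.e.\ \emph{without} dividing by $\a_k$, and by extracting a uniform lower bound $m_k\ge\delta$ from the Fokker--Planck equation; your parametric map has no analogue of either device. The auxiliary assumptions that $f$ has compact support and is bounded below there, together with the unexecuted truncation argument, are a further (smaller) gap. In short: correct reduction, plausible plan, but the two invariance/degeneracy claims on which Brouwer's theorem is to be applied are asserted rather than proved.
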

We also observe that, as for \eqref{kmfg_MFG}, it is not reasonable to expect uniqueness for the solution of \eqref{se_MFG}.
\begin{remark}\label{se_general}
	In $1$-dimensional case, the coupling term is
	\begin{equation}\label{rem:0}
		F(x,m_k, m)=\half\left|\dfrac{x-\m_k}{\s_k^2}\right|^2.
	\end{equation}
	where
	\begin{equation}
		\mu_k=\frac{\int_{\R}x\gamma_{k}(x)f(x)dx}{\int_{\R} \gamma_{k}(x)f(x)dx}, \quad  
		\sigma_k^2=\,\dfrac{\int_{\R } (x-  \mu_k)^2\g_k(x)f(x)dx}{\int_{\R } \g_k(x)f(x)dx}.
		\label{rem:12}
	\end{equation}
	A Gaussian mixture $m(x)=\sum_{k=1}^K \b_k\cN(x|\nu_k,\t_k)$ is said to be consistent with the data set, in sense of Definition \ref{d:consistency}, if $\n_k=\m_k$, $\t^2_k=\e \sigma^2_k$, where $\mu_k$ and $\sigma_k$ defined as in \eqref{rem:12}, and $\b_k=\int_{\R}\gamma_{k}(x)f(x)dx$.\\
\end{remark}
A general class of  linear-quadratic MFG systems admitting a Gaussian distribution as solution of the Fokker-Planck equation was studied in \cite{bardi}.
\begin{remark}
	If we consider   a coupling cost $F_k$ in \eqref{emmfg_cost} independent of $\Sigma_k$, i.e. $\Sigma_k=\sigma^2 I$, where $I$ is the identity matrix, we have $ F_k(x,m_k, m)=\dfrac{1}{2\sigma^4} (x-\mu_k)^t(x-\mu_k)$. Then, the solution of the MFG system \eqref{se_MFG} gives a mixture of Gaussian densities
	$m(x)=\sum_{k=1}^K\a_k\cN(x|\m_k,\e \sigma^2 I )$.\\
	Arguing formally and sending $\sigma^2\to 0$,  the responsibility $\g_k(x)$, 	corresponding to  the index $k$   minimizing the distance of $x$ from the barycentre $\m_k$,  tends to $1$, while the other responsibilities $\g_j(x)$, for $j\neq k$, tend   to $0$.  Set $S_k=\{x\in\R^d:\lim_{\sigma^2\to 0} \g_k(x)=1\}$ and observe that these sets coincide with the clusters defined in \eqref{kmfg_cluster}   for the MFG K-means model.
	Moreover the consistency condition for the mean in  \eqref{se_cons} converges to
	\[\int x m_k(x)dx=\frac{\int_{S_k} xf(x)dx}{\int_{S_k} f(x)dx}\]
	which coincides with \eqref{kmfg_regle}. Hence the MFG K-means model can be interpreted, at least formally, as the limit of the MFG EM model for $\sigma^2\to 0$ (we plan to do a more rigorous analysis of this observation in the future).\\
	Sending \ $\epsilon \to 0^+$ in \eqref{se_MFG} has a different effect in the model. Indeed, in this case, the
	problem degenerate in a first order MFG system and the finite  mixture $m$ tends to a convex combination of Dirac functions concentrated in the barycentres $\mu_k$.
\end{remark}
\begin{remark}
	The approach developed in this section is a soft clustering model as the classical EM algorithm, while the one in Section \ref{subsec:coron} is a hard clustering one, as the K-means algorithm. In both the models, the coupling terms   have the effect of aggregating the agents around the closer barycenter. But, while in the MFG K-means model, the clusters are circular, in the model discussed in this section, thanks to the additional parameters given by the covariance matrices, the clusters are ellipsoidal and therefore, in some situations,    give a better approximation of the data set. This advantage of the EM algorithm with respect to the K-means algorithm  is   well-known also in the classical setting discussed in Section \ref{sec:Algorithm}.\\
	We consider an ergodic MFG system, see  \eqref{se_MFG}, instead of a discounted MFG system as \eqref{kmfg_MFG}, but this does not change in an essential way the model. The choice of an ergodic system has been made in order to apply the algorithm developed in \cite{cc}.  We also observe that system \eqref{se_MFG} has the classical structure of a MFG system, where the Fokker-Planck equation is the adjoint of the linearized of the Hamilton-Jacobi-Bellman equation, while this property is not satisfied by system \eqref{kmfg_MFG}.
	
\end{remark}

\section{The general MFG EM model}\label{sec:general_problem}
In this section, we formulate a general MFG model for a finite mixture of probability densities. \par
We consider a smooth, bounded domain $\Omega$ containing 
the support of the  data set $f$ which satisfies the conditions in \eqref{mfg_density}.  Given a mixture of density functions as in \eqref{emmfg_mix} and defined the corresponding responsibilities $\g_k$ as in \eqref{emmfg_resp}, we consider the following multi-population MFG system

 \begin{equation}\label{gen_MFG}
\left\{
\begin{array}{lll}
-\e  \Delta u_k(x)+ H_k(x,Du_k(x))+\l_k=  F_k(x,m_k,m),\quad& x\in \Omega,& \textit{(HJ)} \\[8pt]
\e  \Delta m_k(x)+\diver (m_k(x)D_pH_k(x,Du_k(x)) )=0,&x\in \Omega, &\textit{(FP)}\\[8pt]
\pd_n u_k(x)=0,&x\in \partial \Omega,&\textit{(HJn)}\\[8pt]
\e\pd_n m_k(x)+m_k(x)D_pH_k(x,Du_k(x))\cdot \vec{n} =0,&x\in \partial \Omega,&\textit{(FPn)}\\[8pt]
\a_k=\int_\Omega    \g_k(x)  f(x)dx,  & &\textit{(COEFF)}\\[8pt] 
 m_k\ge 0,\,\int_\Omega m_k(x)dx=1, \int_\Omega u_k( x)dx=0,& &\textit{(NORM)}
\end{array}
\right.
\end{equation}
for $k=1,\dots,K$, where $\vec{n}$ is the outward normal to the boundary of $\Omega$, $\pd_n$ denotes the  normal derivative and $\g_k(x)=\a_k m_k(x)/m(x)$, $k=1,\dots,K$ are the responsibilities. The boundary conditions in \eqref{gen_MFG} are given by the Neumann condition (HJn) for   $u_k$ and its dual (FPn) for $m_k$. Moreover, the normalization conditions (NORM) are imposed since $u_k$ is defined up to a constant and $m_k$ represents a probability densiy function.
	  \par
In the rest of the paper, we will make the following assumptions:
\begin{itemize}
	\item[{\bf (H1)}] $\Omega$ is a $C^2$ bounded domain of $\R^d$;
	\item[{\bf (H2)}] the Hamiltonians $H_k$, $k=1,\dots,K$,    are of the form
	\[H_k(x,p)=R_k|p|^r-H^k_{0}(x),\]
	with $R_k>0$, $r> 1$, $H^k_{0}\in C^2(\Omega)$, $\pd_n H^k_{0}(x)\ge 0$ for $x\in \pd \Omega$;
	\item[{\bf (H3)}] The coupling costs $F_k:\Omega\times\R\times\R^{d\times d}\to \R$ are a continuous, uniformly bounded functions of the form  \begin{equation}\label{simple_coupling}
	F_k(x, m_k, m)=\mathcal{F}_k(x,\m_k,\Sigma_k),
	\end{equation} 
	where 
	\begin{align}
	&\mu_k=\int_{\Omega}x\frac{m_{k}(x)}{m(x)}f(x)dx\label{hyp:1},\\  
	&\Sigma_k=\int_{\Omega}(x-\mu_k)(x-\mu_k)^t\frac{m_{k}(x)}{m(x)}f(x)dx.
	\label{hyp:2}
	\end{align}
	 Moreover, for $\m$, $\Sigma$ fixed,   $\mathcal{F}_k(\cdot,\m ,\Sigma)\in W^{1,\infty}(\Omega)$.
\end{itemize}
\begin{remark}
	If $\a_k=\int_\Omega \gamma_k(x) f(x)dx\neq 0$, then,  
	 \[\mu_k=\frac{1}{\a_k}\int_{\Omega}x\frac{\a_k m_{k}(x)}{m(x)}f(x)dx= \frac{\int_{\Omega}x\gamma_{k}(x)f(x)dx}{\alpha_k}=\frac{\int_{\Omega}x\gamma_{k}(x)f(x)dx}{\int_\Omega \gamma_k(x) f(x)dx},\]
which has to be compared with the definition of $\m_k$ in  \eqref{emmfg_mean} (similarly  for $\Sigma_k$). As we will see in the proof of the existence theorem, since $m(x)>0$ for all $x\in \Omega$, definitions \eqref{hyp:1}-\eqref{hyp:2} avoid the problem of having $\a_k=0$ in the definition of $\mu_k$, $\Sigma_k$.
\end{remark}   

\begin{remark}
	In view of different  applications from  Cluster Analysis, it is possible to consider more general Hamiltonians and coupling terms. For example, coupling terms $F_k$ involving the Kullback-Leibler divergence $$m_k \ln\left( \frac{q_k }{m_k }\right),$$ 
	where $q_k$ is a function of the data set $f$.
	 But to maintain the parallelism with classical EM algorithm, we prefer to restrict to  the simpler case \eqref{simple_coupling}.
\end{remark}

We   now  give the definition of solution for the  MFG system \eqref{gen_MFG}. Note that the coefficients of the mixture $\a_k$ are part of the unknowns of the system.
\begin{definition}\label{def_sol}
A solution of \eqref{gen_MFG} is a family of $K$  quadruples  $(u_k,\l_k,m_k,\a_k)$  such that $u_k\in C^2(\overline \Omega)$, $\l_k\in\R$, $m_k\in W^{1,2}(\Omega)$, $\a_k\in[0,1]$ 	 and
\begin{itemize}
\item[(i)] $(u_k,\l_k)$ satisfies 	(HJ)-(HJn) in point-wise sense.
\item[(ii)] $m_k$ satisfies (FP)-(FPn) in weak sense, i.e.
\[\e \int_\Omega  Dm_k(x)\cdot D\phi(x)dx+ \int_\Omega m_k(x)D_pH_k(x,Du_k(x))\cdot D\phi(x) dx=0\]
   for all $\phi\in W^{1,2}(\Omega)$.
\item[(iii)]  The coefficients $\a_k$ satisfy (COEFF) (note that this implies  $\a_k\in [0,1]$ and $\sum_{k=1}^K\a_k=1$).
\item[(iv)] The normalization conditions (NORM)  are satisfied.
\end{itemize}
\end{definition}
We are going to prove  existence of a solution to \eqref{gen_MFG}. The proof is similar to the one of \cite[Theorem 4]{cirant}, the main difference is   the presence of the additional unknowns given by the coefficients $\a_k$.
\begin{theorem}\label{theorem}
	Assume {\bf (H1)-(H3)}. Then, there exists a solution $(u_k, \lambda_k, m_k, \alpha_k) $, $k=1,\dots,K$, of \eqref{gen_MFG} in the sense of Definition \ref{def_sol}. Moreover $u_k\in C^2(\ov \Omega)$, $m_k\in W^{1,p}(\Omega)$ for any $p\ge 1$ and $m_k\ge \delta>0$ for some constant $\delta$.
\end{theorem}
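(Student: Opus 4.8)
\emph{The plan} is a Schauder fixed-point argument that decouples \eqref{gen_MFG} into the $K$ ergodic Hamilton--Jacobi cell problems and the $K$ Fokker--Planck equations, with the coefficients $\alpha_k$ treated as extra fixed-point variables ranging over the simplex. I would work on
\[
\mathcal K=\Big\{(\mathbf m,\boldsymbol\alpha):\ m_k\in C^{0,\theta}(\ov\Omega),\ \|m_k\|_{C^{0,\theta}}\le C_0,\ m_k\ge\delta,\ \textstyle\int_\Omega m_k\,dx=1,\ \alpha_k\ge 0,\ \textstyle\sum_k\alpha_k=1\Big\},
\]
with constants $\delta>0$, $C_0$ to be fixed below; $\mathcal K$ is convex and compact in $C(\ov\Omega)^K\times\R^K$. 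Given $(\mathbf m,\boldsymbol\alpha)\in\mathcal K$, put $m=\sum_j\alpha_jm_j\ge\delta>0$, so the moments $\mu_k,\Sigma_k$ of \eqref{hyp:1}--\eqref{hyp:2} are well defined and the coupling $F_k(x)=\mathcal{F}_k(x,\mu_k,\Sigma_k)$ is, by (H3), bounded in $W^{1,\infty}(\Omega)$ uniformly in the input. Then I would solve, for each $k$, the cell problem (HJ)--(HJn) for $(u_k,\lambda_k)$ with $\int_\Omega u_k\,dx=0$, and the Fokker--Planck problem (FP)--(FPn) with drift $D_pH_k(x,Du_k)$, $\int_\Omega\tilde m_k\,dx=1$, $\tilde m_k\ge 0$, obtaining $\tilde m_k$; finally I would update the coefficients by
\[
\tilde\alpha_k=\int_\Omega\frac{\alpha_k\,\tilde m_k(x)}{\sum_j\alpha_j\,\tilde m_j(x)}\,f(x)\,dx,
\]
which keeps $\tilde\alpha_k\ge 0$ and, summing in $k$, $\sum_k\tilde\alpha_k=\int_\Omega f\,dx=1$. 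A fixed point $(\mathbf m,\boldsymbol\alpha)$ of $\Phi:(\mathbf m,\boldsymbol\alpha)\mapsto(\tilde{\mathbf m},\tilde{\boldsymbol\alpha})$, paired with the associated $(u_k,\lambda_k)$, is the sought solution: at a fixed point $m_k=\tilde m_k$ and $\alpha_k=\tilde\alpha_k$, hence $\alpha_k=\int_\Omega(\alpha_km_k/m)f\,dx=\int_\Omega\gamma_kf\,dx$, which is exactly (COEFF), while (NORM) holds and the remaining equations and boundary conditions of \eqref{gen_MFG} hold by construction.

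The ingredients I would assemble are standard. For the cell problem, under (H1)--(H2) (coercive Hamiltonian $R_k|p|^r$, $r>1$, $H_0^k\in C^2$, $\partial_nH_0^k\ge 0$) a pair $(u_k,\lambda_k)$ exists and is unique up to the normalization — e.g.\ as the vanishing-discount limit of $\rho w_\rho-\e\Delta w_\rho+H_k(x,Dw_\rho)=F_k$ — and I would quote the statements used in \cite{bardi,cirant}. The essential point is a block of \emph{uniform} a priori bounds: $\lambda_k$ bounded in terms of the data, a Lipschitz bound on $u_k$ via the Bernstein method (whose boundary term is controlled precisely by the sign condition $\partial_nH_0^k\ge 0$), and then, by Schauder elliptic regularity, a $C^{2,\theta}(\ov\Omega)$ bound on $u_k$ depending only on the uniform $W^{1,\infty}$ bound on $F_k$; consequently $D_pH_k(x,Du_k)$ is bounded and continuous on $\ov\Omega$, uniformly in the input. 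For the Fokker--Planck equation with such a drift and the conormal boundary condition there is a unique nonnegative weak solution $\tilde m_k\in W^{1,2}(\Omega)$ of unit mass; elliptic regularity then gives $\tilde m_k\in C^{0,\theta}(\ov\Omega)\cap W^{1,p}(\Omega)$ for every $p$, and the Harnack inequality gives $\tilde m_k\ge\delta_0>0$ with $\delta_0$ depending only on the drift bound, hence uniform over $\mathcal K$. Choosing $C_0$ equal to the resulting Hölder bound and $\delta=\delta_0$ makes $\mathcal K$ invariant under $\Phi$, and these same estimates yield the extra regularity claimed in the theorem ($u_k\in C^2(\ov\Omega)$, $m_k\in W^{1,p}$ for all $p\ge 1$, $m_k\ge\delta$).

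To close, I would verify continuity of $\Phi$ on $\mathcal K$: since $m=\sum_j\alpha_jm_j\ge\delta$ stays bounded below, $(\mathbf m,\boldsymbol\alpha)\mapsto(\mu_k,\Sigma_k)$, hence $(\mathbf m,\boldsymbol\alpha)\mapsto F_k$, is continuous into $C(\ov\Omega)$ (and stays bounded in $W^{1,\infty}$); the solution operators of the cell problem and of the Fokker--Planck equation depend continuously on $F_k$ and on the drift respectively, by uniqueness together with the compactness furnished by the a priori bounds; and $\boldsymbol\alpha\mapsto\tilde{\boldsymbol\alpha}$ is continuous because $\sum_j\alpha_j\tilde m_j\ge\delta_0$. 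Schauder's theorem then provides the fixed point. I expect the main obstacle to be exactly this block of \emph{uniform} a priori estimates for the Hamilton--Jacobi cell problems — the Lipschitz and $C^{2,\theta}$ bounds on $u_k$ that hold uniformly over the admissible couplings \eqref{simple_coupling} and are compatible with the Neumann condition (this is where (H2), in particular $\partial_nH_0^k\ge 0$, is used) — together with the uniform positive lower bound for $m_k$; by contrast, the additional unknowns $\alpha_k$ cost essentially nothing, being absorbed by enlarging the fixed-point domain to include the simplex, which is the only genuine difference from \cite[Theorem 4]{cirant}.
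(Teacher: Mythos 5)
Your proposal is correct and follows essentially the same route as the paper: a Schauder fixed-point argument on the convex compact set of (simplex weights, uniformly Hölder/positive densities), decoupling the system into the $K$ Hamilton--Jacobi cell problems and the $K$ Fokker--Planck equations, with uniform a priori bounds (on $\lambda_k$, $Du_k$, and the positive lower bound for $m_k$) guaranteeing invariance, and continuity of the solution maps closing the argument. The only cosmetic deviations are that you update $\alpha_k$ through the new densities $\tilde m_k$ rather than the old ones and use a Hölder ball in place of the paper's $W^{1,p}$ ball ($p>d$) for compactness; both variants yield the same consistency condition (COEFF) at the fixed point.
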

\begin{proof}
Fixed $p>d$, we define the sets
\begin{align*}
&\mathcal{P}= \{m \in L^1(\Omega)\  \text{s.t.} \ \int_{\Omega} m(x)dx=1 \}\\
& \cD= \mathcal{P}\cap \{m  \in W^{1, \infty}(\Omega) \ \text{s.t.} \ \vert \vert m \vert \vert_{W^{1, p}(\Omega) }\leq  C, m(x) \geq \delta >0 \},
\end{align*}
where $C$, $\d$ are two constants to be fixed later, and we consider the set
\begin{equation}\label{set_fix}
\begin{split}
\mathcal{C}= \{ (\a,m)=&(\alpha_1, \dots, \alpha_K, m_1, \dots, m_K): \,\a_k \in  [0,1], \\
&\ m_k  \in  \cD, \ k=1, \dots, K,\,  \sum_{k=1}^{K} \alpha_k=1\}.
\end{split}
\end{equation}
It is easy to see that the set $\mathcal{C}$ is convex. Moreover, it is compact with respect to the standard topology of $\R\times\dots\times\R\times C(\ov \Omega)\times\dots\times C(\ov \Omega)$ since $\a_k\in [0,1]$, $k=1, \dots, K$,  and the set $\cD$ is compactly embedded in $C^{0, \b}(\ov {\Omega})$ with $\b \leq (p-d)/p$ (see Theorem 7.26 in \cite{Gilbarg}).\\
We define the map $\Psi : \mathcal{C}\to \mathcal {C}$ which, to a  vector	$(\beta,\eta)=(\beta_1, \dots, \beta_K, \eta_1, \dots, \eta_K)$ associates the vector $(\alpha,m)=(\alpha_1, \dots, \alpha_K, m_1, \dots, m_K) $   defined in the following way:\\
{\it Step (i):} 	 
Given $(\beta, \eta) \in \mathcal{C}$,  we define the responsibilities  
\begin{equation}\label{exMFG_0}
\gamma_k(x) = \dfrac{\beta_k \eta_k(x)}{\sum_{k=1}^K\beta_k\eta_k (x)}, \quad   k=1, \dots, K.	
\end{equation}
Note the responsibilities are well defined since, by $\eta_k\ge \d>0$ and $\sum_{k=1}^K\b_k=1$, we get $\sum_{k=1}^K\beta_k\eta_k (x)\ge \d$ for $x\in \Omega$. \par
{\it Step (ii):} 
Given $\gamma_k(x)$ as in {\it Step (i)}, we define $\mu_k$, $\Sigma_k$ as in \eqref{hyp:1}-\eqref{hyp:2} and we consider the Hamilton-Jacobi-Bellman equations
	\begin{equation}\label{exMFG_1}
	\left\{
	\begin{array}{ll}
	-\e\Delta u_k(x)+ H_k(x,Du_k(x))+ \lambda_k=\mathcal{F}_k(x,\mu_k,\Sigma_k),\quad& x\in \Omega, \\[6pt]
	\partial_n u_k(x)=0 \quad & x \in \partial \Omega,\\[6pt]
	\int_{\Omega} u_k(x)dx=0,
	\end{array}
	\right.
	\end{equation}
for $k=1,\dots,K$. Note that the previous  systems  are independent of  each other. By \cite[Theorem II.1]{Lions}, \eqref{exMFG_1}
	admits a unique solution $( u_k,  \lambda_k) \in C^2(\bar \Omega) \times \mathbb{R}$ with
\begin{equation}\label{exMFG_2}
\vert \lambda_k \vert \leq    C, \qquad \Vert u_k \Vert_{W^{1, \infty}(\Omega)}\leq C
\end{equation}
for some positive constant $C$  which depends only on $\Omega$, $H$ and  $\Vert \mathcal{F}_k(\cdot,\mu_k,\Sigma_k) \Vert_{L^{\infty}}$.
\par
{\it Step (iii):} 
Given $u_k$ as in {\it Step (ii)}, we consider the problems
	\begin{equation}\label{exMFG_3}
	\left\{
	\begin{array}{ll}
	\e\Delta m_k(x)+\diver (m_k(x)D_p   H_k (x,Du_k(x) )=0 & x \in \Omega\\[6pt]
	\e \partial_n m_k(x)+ m_k(x)D_pH_k(x,Du_k(x))\cdot \vec{n}(x)=0& x\in \partial \Omega \\[6pt]
	m_k\ge 0,\,\int_\Omega m_k(x)dx=1
	\end{array}
	\right.
	\end{equation}
for $k=1,\dots,K$. Since   \eqref{exMFG_2} implies that $| D_pH_k(x,Du_k(x))|$  is uniformly bounded in $\Omega$,
by   Theorems II.4.4, II.4.5, II.4.7 in \cite{Bensoussan}, it follows that problem \eqref{exMFG_3}
admits a unique weak solution $ m_k\in W^{1,2}(\Omega)$. Moreover, $m_k \in W^{1, p}(\Omega)$ for all $p \geq 1$, $m_k \in C(\ov{\Omega})$ and  
\begin{align}
\vert \vert m_k \vert \vert _{W^{1, p}(\Omega)} \leq \hat{C},\label{exMFG_5}\\
\hat\delta\leq m_k(x) \leq \hat\delta^{-1} \quad \forall x \in \Omega,\label{exMFG_4}
\end{align}
 for   constants $\hat C$, $\hat\d>0$ which  depend   only on $\Vert  DH_k(x,Du_k(x))  \Vert_{L^{\infty}(\Omega)}$ and therefore, recalling \eqref{exMFG_2},    on $\Omega$, $H$ and  $\Vert \mathcal{F}_k \Vert_{L^{\infty}}$. 
Hence, in \eqref{set_fix}, fixing $C$ larger that $\hat C$ in \eqref{exMFG_5} and $\d$ smaller than $\hat \d$ in \eqref{exMFG_4}, we conclude $m_k \in \mathcal{D}$, \mbox{$k=1,\dots,K$.} \par
Furthermore, we define  
\begin{equation}\label{exMFG_6}
	\alpha_k := \int_{\Omega} \gamma_k (x) f(x) dx.
\end{equation}
Since $0\le \gamma_k (x)\le 1$   and $\sum_{k=1}^K\gamma_k (x)=1$ for $x\in \Omega$, it follows that $\alpha_k\in [0,1]$ and $\sum_{k=1}^K\alpha_k=1$.\par
We conclude that the map $\Psi$, which  to  the vector $(\eta,\beta)$ associates the vector $(\a,m)$ given by \eqref{exMFG_3}-\eqref{exMFG_6}, is well defined and maps the set $\mathcal{C}$ into itself.\par
We now show that the map $\Psi$ is continuous with respect to the $\R\times\dots\times\R\times C(\ov \Omega)\times\dots\times C(\ov \Omega)$   topology.	Let $(\beta^n, \eta^n)$ be a sequence in $\mathcal{C}$ such that    $\beta_k^n$ converges  $\beta_k$ in $\R$ and  $\eta_k^n $   converges  to $\eta_k$  uniformly in $\ov \Omega$, for all $k=1\dots,K$. Since   	$\sum_{k=1}^{K}\beta^n_k \eta^n_k $ converges  uniformly to $ \sum_{k=1}^{K} \beta_k \eta_k$  and $\eta^n_k,\eta_k\ge \delta>0$ in $\Omega$,  defined
$\gamma_k^n$, $\gamma_k$ as in \eqref{exMFG_0}, it follows that 
\begin{equation}\label{exMFG_7}
\gamma_k^n\to \gamma_k\quad \text{for $n\to \infty$, uniformly in $\ov \Omega$}.
\end{equation}
Then, by  $0\le \gamma_k^n,\gamma_k\le 1$,  \eqref{exMFG_7}  and the dominated convergence theorem, we have  
\begin{equation}\label{exMFG_8}
	\lim_{n\rightarrow \infty}\alpha_k^n  = \lim_{n\rightarrow \infty} \int_{\Omega}\gamma_k ^n(x) f(x) dx= \int_{\Omega} \gamma_k(x) f(x) dx= \alpha_k.
\end{equation}
Moreover, since $\sum_{k=1}^{K}\beta^n_k \eta^n_k $, $\sum_{k=1}^{K}\beta _k \eta_k\ge \d$ and $\eta^n_k$ converges to $\eta_k$ uniformly, we have
\begin{align*}
\dfrac{\eta_k^n}{\sum_{k=1}^{K}\beta^n_k \eta^n_k } \longrightarrow \dfrac{\eta_k}{\sum_{k=1}^{K}\beta_k \eta_k },\quad \text{for $n\to \infty$, uniformly in $\overline\Omega$}.
\end{align*} 
Recalling   definitions \eqref{hyp:1}-\eqref{hyp:2}, it follows that
	\begin{equation*}
 \lim_{n\rightarrow \infty} \mu_k^n =
	 \lim_{n\rightarrow \infty} \int_{\Omega}\dfrac{x\eta_k^n(x)f(x) }{\sum_{k=1}^{K}\beta^n_k \eta^n_k (x)}dx= \int_{\Omega} \dfrac{x\eta_k(x)f(x) }{\sum_{k=1}^{K}\beta_k \eta_k (x)}dx= \mu_k
	\end{equation*}
and, similarly,    $\lim_{n\rightarrow \infty}\Sigma_k^n=\Sigma_k$.\\
It follows that  $\mathcal{F}_k(x, \mu_k^n, \Sigma_k^n)$ converges uniformly to $\mathcal{F}_k(x, \mu_k, \Sigma_k)$.  Denoted 
by $(u^n_k,\l^n_k)$ and $(u_k, \l_k)$, $k=1,\dots,K$, the solutions of \eqref{exMFG_1} corresponding respectively to 
$\mathcal{F}_k(x, \mu_k^n, \Sigma_k^n)$ and $\mathcal{F}_k(x, \mu_k, \Sigma_k)$,   by a standard stability result for \eqref{exMFG_1}, we  have that  $\l^n_k$ converges to $\l_k$ and $u^n_k$  converges $u_k$ uniformly in $\ov \Omega$. Moreover $D u_k^n\to Du_k$ locally uniformly in $\Omega$ (see \cite[Theorem 4 ]{cirant} for details).\\
Let  $m_k^n$, $m_k$ be the solutions of \eqref{exMFG_3} corresponding to $u_k^n$ and $u_k$. By the estimate \eqref{exMFG_5}, the functions $m_k^n$ are equi-bounded in $W^{1,p}$. By the local uniform convergence of   $D u_k^n$ to $Du_k$,  passing to the limit in
the weak formulation of \eqref{exMFG_3} yields that (possibly passing to a subsequence) that $m_k^n$ converges to
a  weak solution of the problem, which, by uniqueness, coincides with $m_k$.  Hence $m^n_k$ converges uniformly to $m_k$, $k=1,\dots,K$, and, recalling \eqref{exMFG_8}, we conclude that the map $\Psi$ is continuous. \par
 Applying the Schauder's Fixed Point Theorem to the map $\Psi:\mathcal{C}\to \mathcal{C}$, we get that there exists
$(\alpha,m)\in \mathcal{C}$ such that $\Psi(\alpha,m)=(\alpha,m)$, hence a solution to \eqref{gen_MFG}.
 \end{proof}


\section{Numerical approximation and examples}\label{sec:Numericalanalysis}
In this section, we present an algorithm for the cluster problem, based on the numerical approximation of \eqref{gen_MFG}. 
Our aim is to compute the finite mixture model \eqref{emmfg_mix} and the corresponding responsibilities $\g_k(x)$. 
We use an approach similar to the classical EM algorithm. Indeed, rather than solving the full problem \eqref{gen_MFG} in the coupled unknowns $(u_k,\lambda_k,m_k,\alpha_k)$ 
for $k=1,...,K$, we split it in the iterative resolution of $K$ independent sub-problems as follows.

Given an arbitrary initialization $\alpha^0_1, \dots, \alpha^0_k$, $m^0_1 \dots, m^0_k$ for the mixture model, at the $n^{th}$ iteration, we alternate two steps:
\begin{itemize}
	\item []\textbf{E-step.} For $k=1,\dots,K$, compute the new responsibilities, mixture coefficients, barycentres and covariance matrices
	\begin{equation}\label{Estep}
	\gamma^{n}_k(x)= \dfrac{\alpha_k^{n-1}m^{n-1}_k(x)}{m^{n-1}(x)}\,,\quad \a_k^n=\int_\Omega \g^n_k(x) f(x)dx\,,
	\end{equation}
	$$\mu_k^n=\int_{\Omega}x\frac{m_{k}^{n-1}(x)}{m^{n-1}(x)}f(x)dx\,,\quad\Sigma_k^n=\int_{\Omega}(x-\mu_k^n)(x-\mu_k^n)^t\frac{m_{k}^{n-1}(x)}{m^{n-1}(x)}f(x)dx\,.$$
	\item []\textbf{M-step.} For $k=1,\dots,K$ compute the new coupling cost $\mathcal{F}_k(x,\m^n_k,\Sigma^n_k)$ and solve the (independent) $K$     systems
	\begin{equation}\label{MFf_n1}
	\left\{
	\begin{array}{ll}
	-\e  \Delta u_k(x)+ H_k(x,Du_k(x))+\l_k= \mathcal{F}_k(x,\m^n_k,\Sigma^n_k)  ,\quad& x\in \Omega,  \\[8pt]
	\e  \Delta m_k(x)+\diver (m_k(x)D_pH_k(x,Du_k(x)) )=0,&x\in \Omega,  \\[8pt]
	\pd_n u_k(x)=0,&x\in \partial \Omega, \\[8pt]
	\e\pd_n m_k(x)+m_k(x)D_pH_k(x,Du_k(x))\cdot\vec{n} =0,&x\in \partial \Omega, \\[8pt]
	m_k\ge 0,\,\int_\Omega m_k(x)dx=1, \int_\Omega u_k( x)dx=0.&  
	\end{array}
	\right.
	\end{equation}
\end{itemize}
Note that the coupling between the mixture components is entirely embedded in the E-step, and this significantly reduces 
the computational efforts to perform the M-step, in particular the $K$ sub-problems \eqref{MFf_n1} can be solved in parallel. Moreover, for fixed $k$, we observe that the  Hamilton-Jacobi-Bellman equation  is decoupled from the corresponding Fokker-Planck equation. In principle, we could compute  first $u_k$ and then use it to compute $m_k$. Nevertheless, we prefer to solve the system \eqref{MFf_n1} as a whole, also in view of more general coupling costs, in which the dependence on $m_k$ is not frozen at the previous step as in \eqref{Estep}.\\ 
Let us now discuss, more in detail, the main ingredients for a practical implementation of the algorithm. 
First of all, we have to introduce a discretization of the domain $\Omega$. The choice of a structured or an unstructured grid clearly depends 
on the geometry of $\Omega$ and, in turn, on the nature of the data set represented by the distribution $f$. Typically $\Omega$ can be chosen as 
a simple hypercube in $\R^d$ 
containing the support of $f$, as in the tests shown below.\\
 Then, we need some quadrature rule for the approximation of the integrals appearing in the E-step, 
and also a suitable discretization of 
the MFG systems in the M-step, e.g. via finite differences or finite elements (\cite{acd,afd,cs}). Note that each system in \eqref{MFf_n1} consists of two non-linear PDEs 
in the three unknowns $(u_k,m_k,\lambda_k)$, plus 
boundary and normalization conditions. This leads to the building block of the algorithm, namely the numerical solution of a discrete non-linear system 
which is formally {\em overdetermined}, having more equations than unknowns. More precisely, if $\Omega$ is discretized with $N$ of degrees of freedom, 
then $(u_k,m_k,\lambda_k)$ corresponds to a vector of $2N+1$ unknowns, while \eqref{MFf_n1} gives $2N+2$ non-linear equations, subject to $N$ constraints representing 
the condition $m_k\ge 0$. Hence, by employing a Newton-like solver, we end up with non square linearized systems, whose solutions should be meant in a {least-squares} sense, 
as explained in \cite{cc}, where a {\em direct method} for MFG systems is proposed. 
We refer the reader to this paper for further details and the implementation of the method, which will be used in the following tests.

We conclude this discussion with some remarks on the initialization and the stopping criterion of the algorithm. 
As it is commonly used in the classical EM algorithm, we choose all the initial weights $\alpha^0_k=\frac{1}{K}$, and the densities $m^0_k$ given by Gaussian 
PDFs $\cN(x|\mu^0_k,I)$ with random means $\mu^0_k$. The convergence of the algorithm is really sensitive to this initial guess, and 
we observe a relevant speedup if the supports of the $m^0_k$ are as much as possible disjointed. Finally, we choose a tolerance $tol>0$ and we iterate on $n$ the EM-step 
until both $\max_{k=1,...,K}|\m^n_k-\m^{n-1}_k|<tol$ and $\max_{k=1,...,K}|\Sigma^n_k-\Sigma^{n-1}_k|<tol$. 
Note that also the M-step has its own tolerances for the convergence of the Newton solvers for the $K$ MFG systems. In principle, we can choose a single tolerance for the 
whole algorithm and alternate the E-step and M-step in a Gauss-Seidel fashion, namely updating the parameters \eqref{Estep} after a single Newton iteration for 
each system \eqref{MFf_n1}. Surprisingly, this does not improve the convergence and the proposed implementation with two nested loops 
is more effective. Indeed, we observed that, once each mixture component $m_k$ stabilizes around its current mean $\m_k$, in the next iterations it readily moves to 
the new barycentre with just small adjustments of the corresponding covariance matrix $\Sigma_k$. In practice, most of the computational efforts are concentrated on the 
first few iterations of the algorithm. 

Now, let us define the settings for the numerical experiments. We use a uniform structured grid to discretize the domain $\Omega$, and 
a simple rectangular quadrature rule for the E-step. Moreover, we employ standard finite differences for the discretization of the M-step, where we 
consider the particular case of a quadratic Hamiltonian as in \eqref{se_MFG} (see also Remark \ref{se_general}), and we set the diffusion 
coefficient $\varepsilon=1$.
The case of general Hamiltonians and coupling costs will be addressed in a more computational oriented paper. \\     

\noindent{\bf Test 1. } We consider a piece-wise constant distribution $f$ on $\Omega=[0,1]$, composed by three plateaux of different widths and heights, 
such that $\int_0^1 f(x)\,dx=1$. 
In Figure \ref{Test1}, we show 
the solution computed on a uniform grid of $N=201$ nodes, respectively for $K=1,2,3$. The thin line represents $f$, 
while the thick line represents the mixture $m=\sum_{k=1}^K \alpha_k m_k$. 
We clearly see how the mean and the variance of each $m_k$ adapt to the data, according to the 
given number $K$ of mixture components.\\
\begin{figure}[!t]
	\begin{center}
		\begin{tabular}{ccc}
			\includegraphics[width=.3\textwidth]{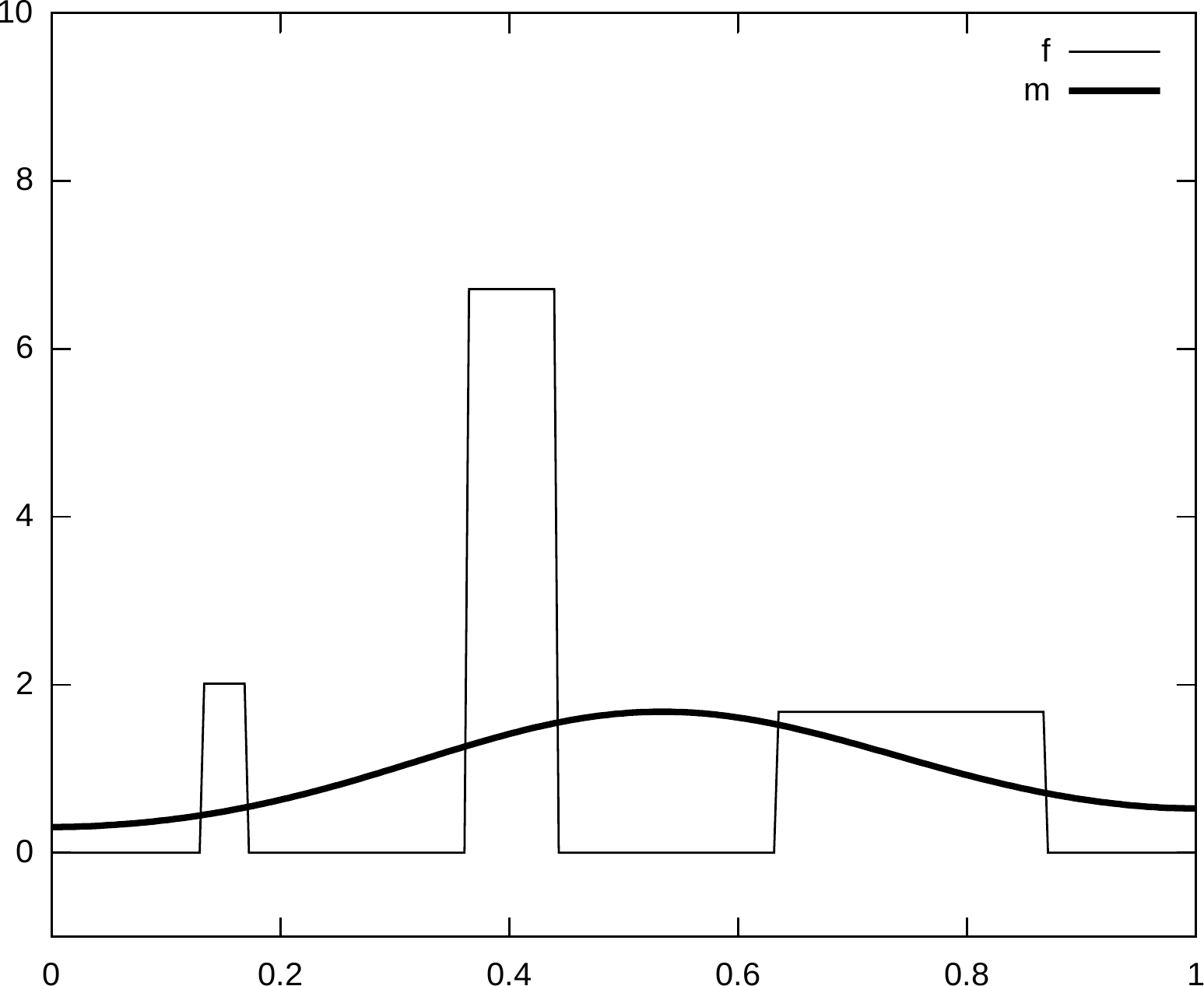}&
			\includegraphics[width=.3\textwidth]{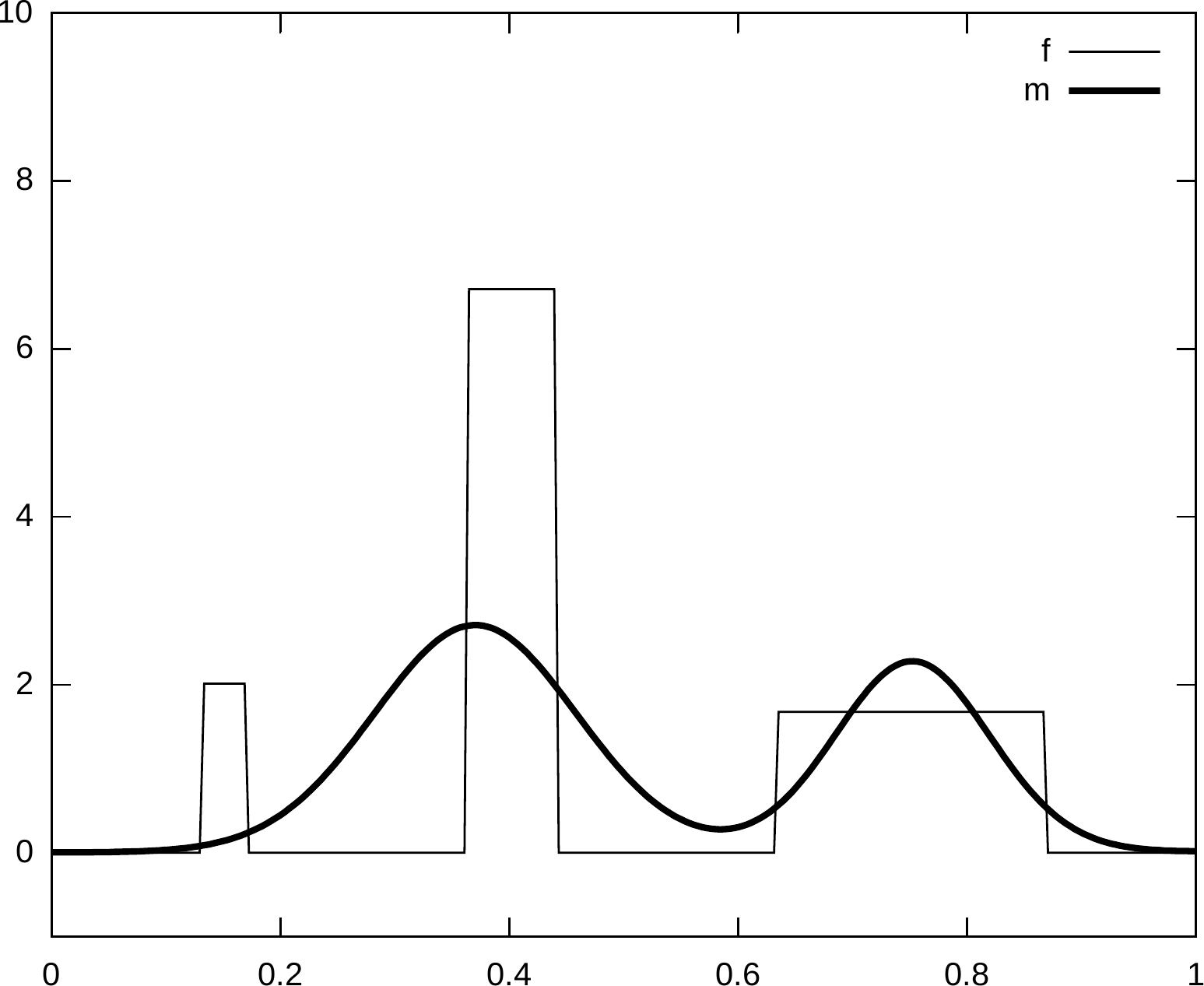}&
			\includegraphics[width=.3\textwidth]{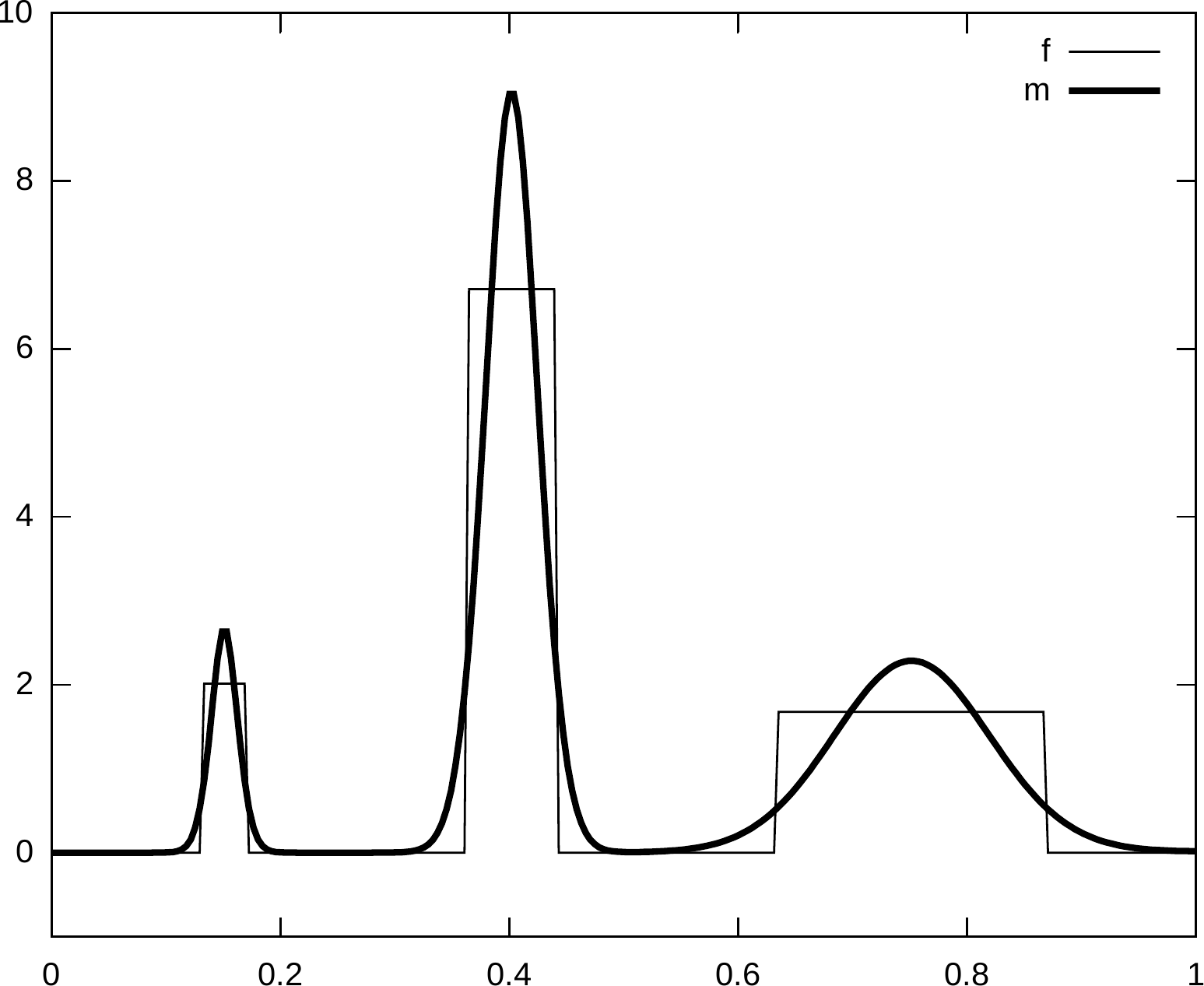}\\
			(a)&(b)&(c)
		\end{tabular}
	\end{center}
	\caption{Approximation of a piece-wise constant data distribution, for $K=1,2,3$ mixture components (from (a) to (c)).}\label{Test1}
\end{figure}

\noindent{\bf Test 2.} We consider the example of a distribution on $\Omega=[0,1]$ with an oscillatory behavior. Indeed, we define 
$f$ by suitably scaling and translating the function $\tilde f=x\sin(4\pi x)$ for $x\in[0,1]$, so that $f$ has compact support and $\int_0^1 f(x)dx=1$. 
In Figure \ref{Test2}, we show the solutions corresponding to $K=1,2,3,4$, again for a uniform grid of $N=201$ nodes. It is interesting to observe that the peaks of $f$ are sequentially approximated as the number $K$ of mixture components increases, according to their heights and the underlying masses.\\

\begin{figure}[!h]
	\begin{center}
		\begin{tabular}{cc}
			\includegraphics[width=.4\textwidth]{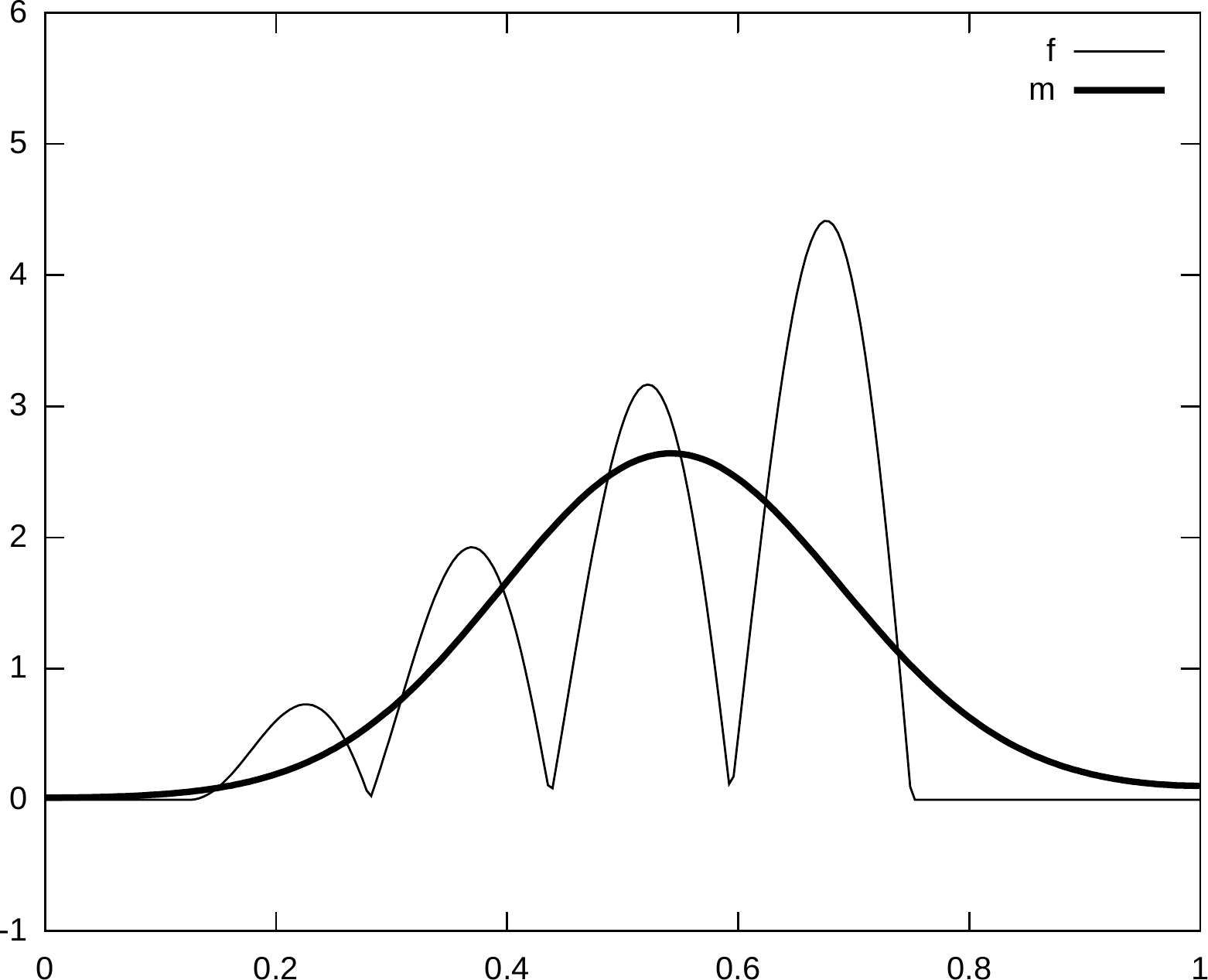}&
			\includegraphics[width=.4\textwidth]{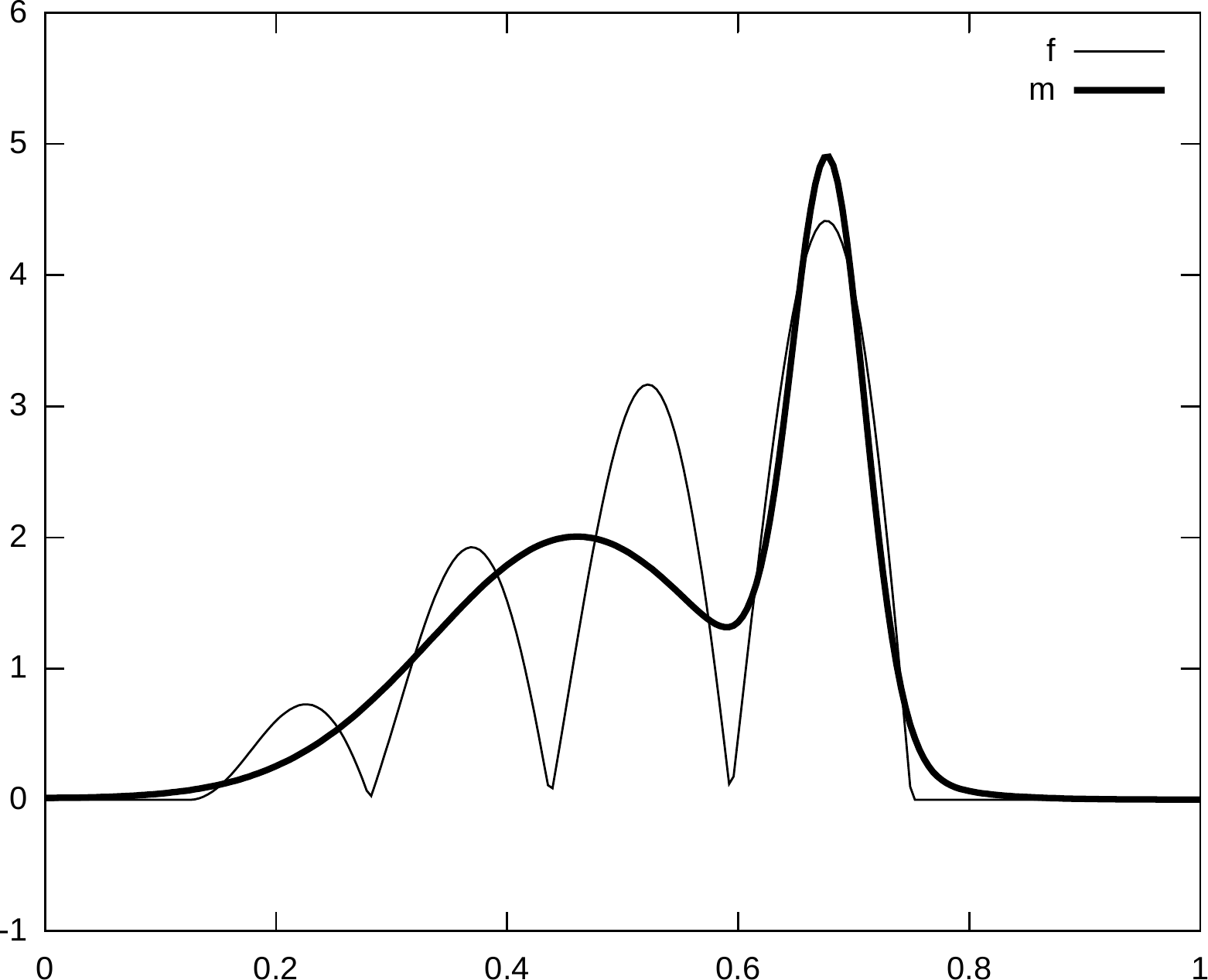}\\
			(a)&(b)\\
			\includegraphics[width=.4\textwidth]{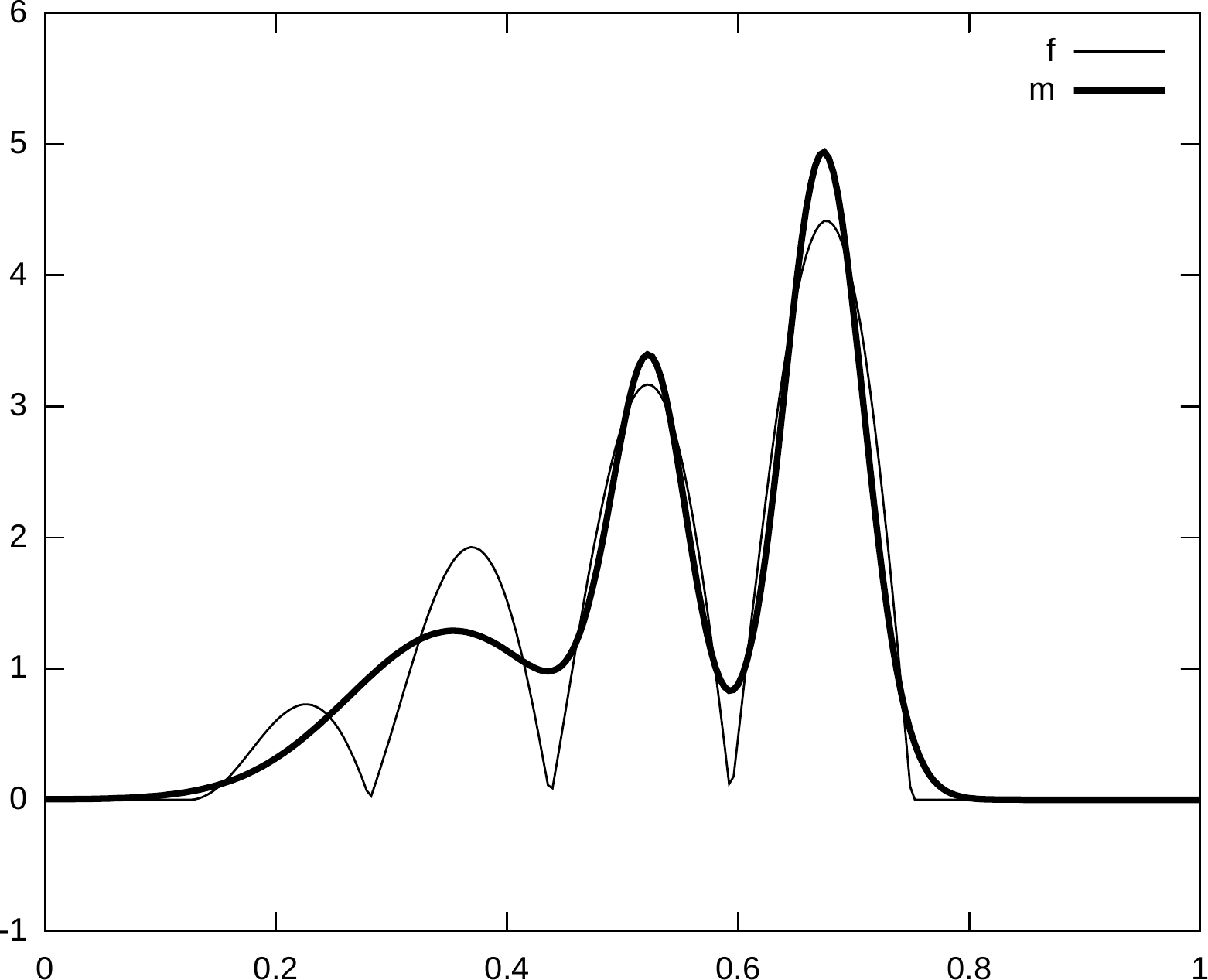}&
			\includegraphics[width=.4\textwidth]{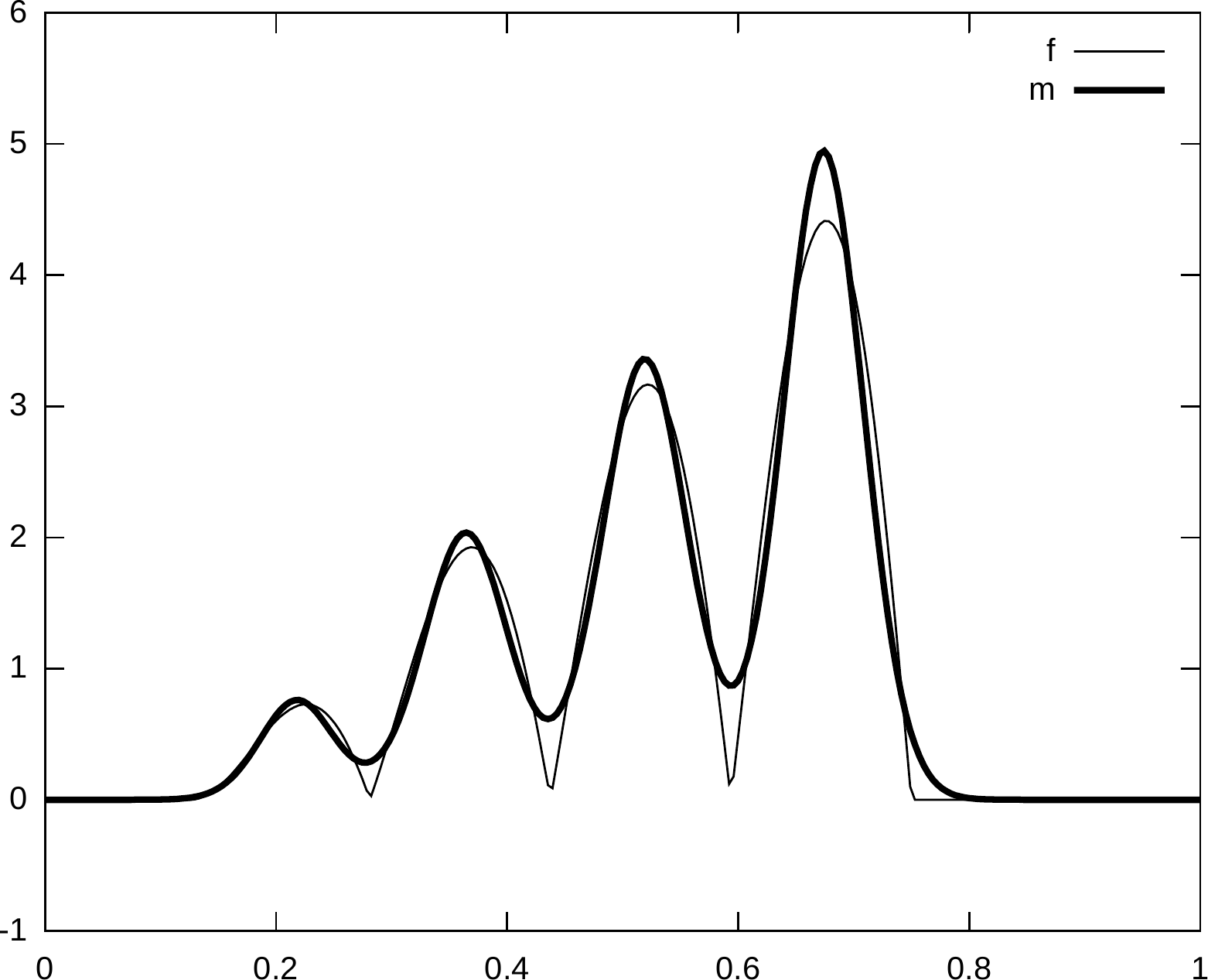}\\
			(c)&(d)
		\end{tabular}
	\end{center}
	\caption{Approximation of an oscillatory data distribution, for $K=1,2,3,4$ mixture components (from (a) to (d)).}\label{Test2}
\end{figure} 

\noindent{\bf Test 3.} 
We present an application of cluster analysis to the so called {\em color quantization} problem in computer graphics. 
Given an image, the aim is to reduce the total number of colors ($2^{24}$ for images in $24$-bit RGB format) to a given number $K$, so that the resulting image 
is as similar as possible to the original one. As pointed out in \cite[Cap. 9]{Bishop}, this problem can be solved by the classical K-means algorithm. Note this approach is quite different from {\em segmentation}, indeed no geometric correlation between pixels is 
considered, but just the color information. Here we apply the MFG clustering algorithm to the case of an image in grey scales. Each pixel in the image contains a level of grey represented by a value in the 
interval $[0,1]$ (typically sampled with $256$ levels, from black $0$ to white $1$), so that the problem is reduced to dimension one, choosing $\Omega=[0,1]$ uniformly 
discretized with $N=256$ nodes. 
To generate the data set distribution $f$, for each $x\in[0,1]$ we count the number of pixels in the image with grey level $x$, then we normalize it to satisfy $\int_0^1 f(x)\,dx=1$. 
In Figure \ref{Test3}, we show the original image and the corresponding distribution $f$, while in Figure \ref{Test3c} we report the results for $K=2,3,5$ clusters respectively. 
We remark that each image is reconstructed from the corresponding mixture by simply using the responsibilities $\{\gamma_k\}_{k=1,...,K}$ to map the single pixel grey value to the barycentre of its most representative cluster.
If the pixel $p$ has grey value $x_p$, then it is mapped to the value $\mu_{k^*}$, \mbox{where 
$k^*=\arg\max_{k=1,...,K}\gamma_k(x_p)$.}\\
\begin{figure}[!h]
	\begin{center}
		\begin{tabular}{cc}
			\includegraphics[width=.475\textwidth]{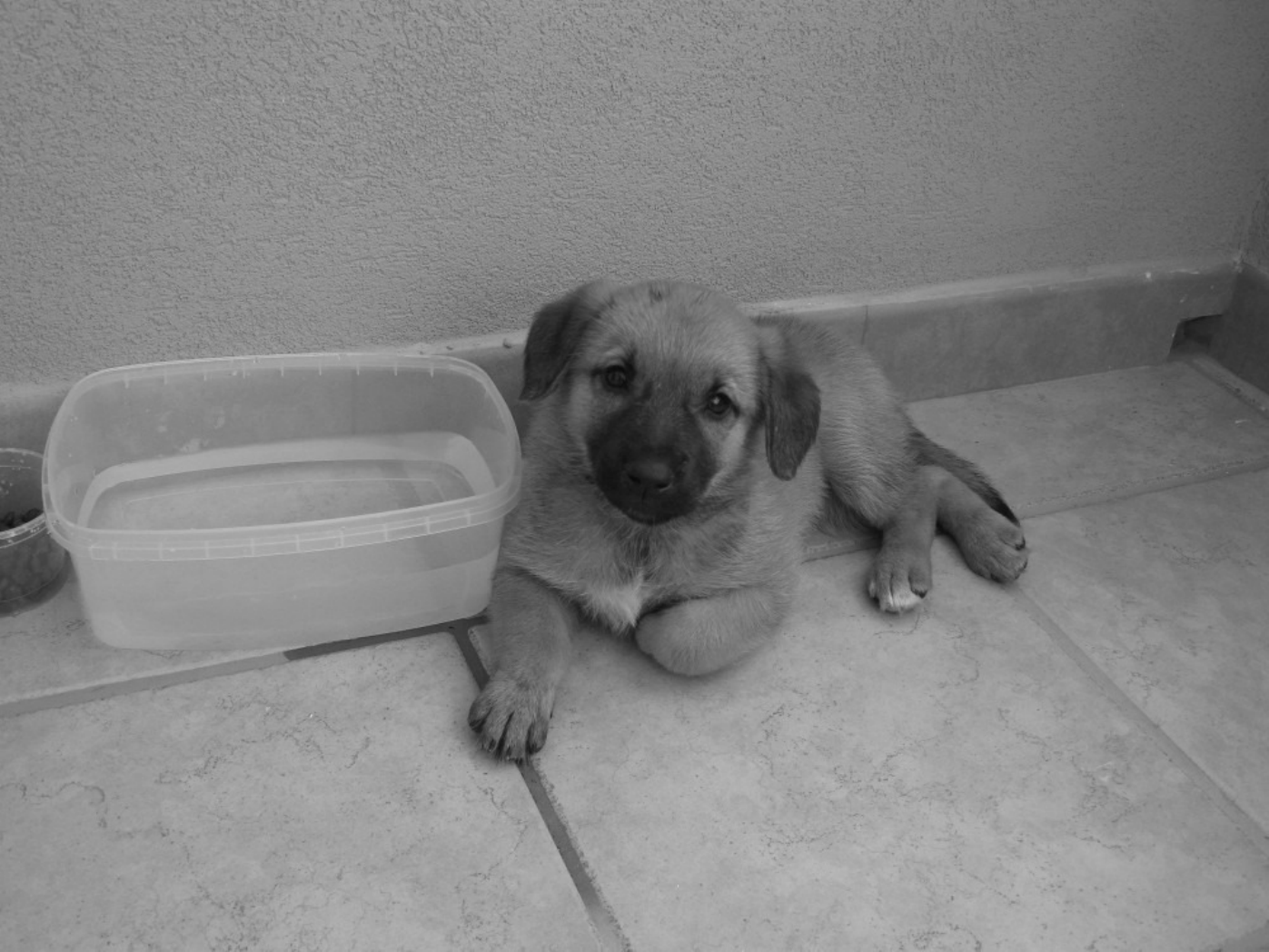}&
			\includegraphics[width=.45\textwidth]{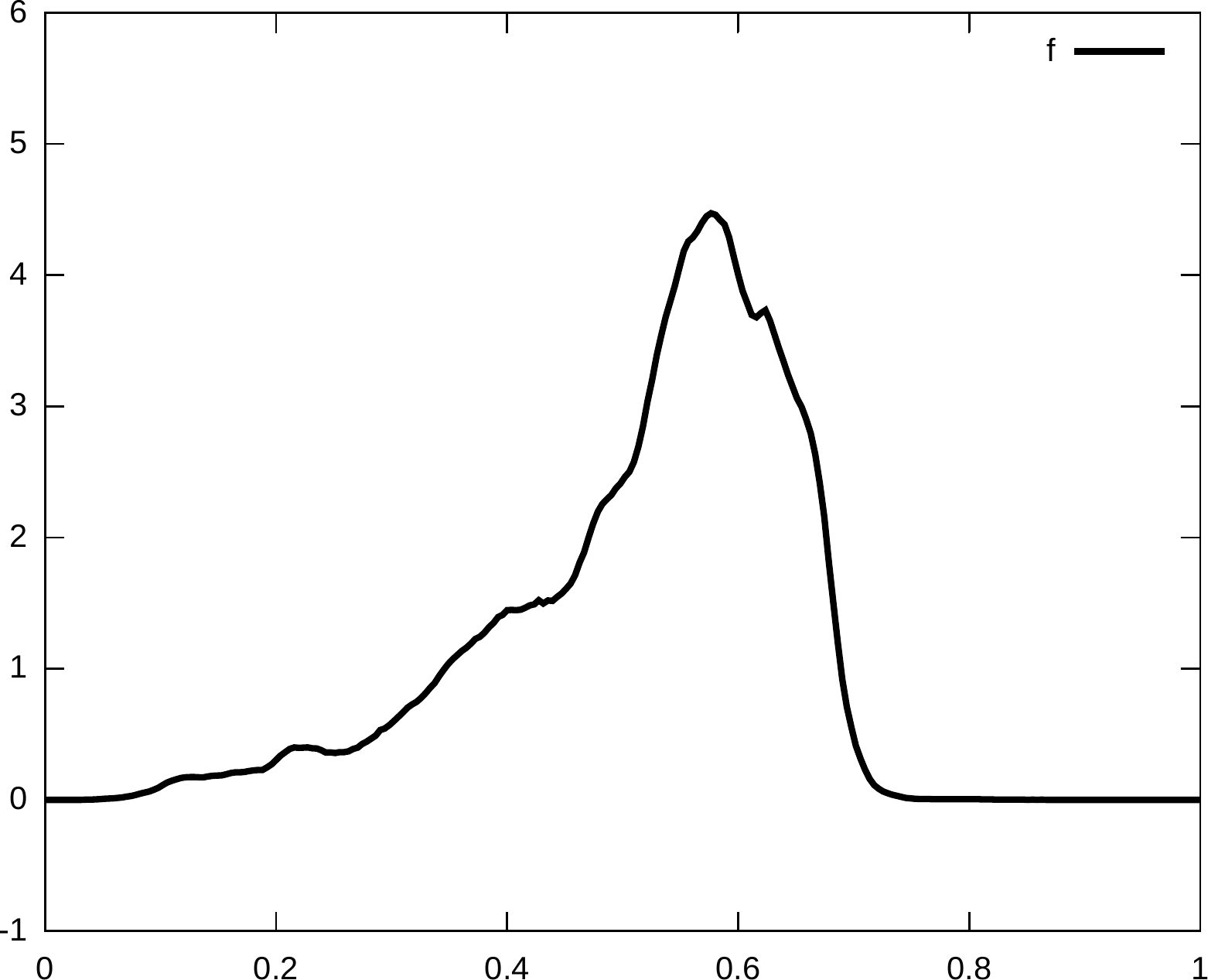}\\
			(a)&(b)
		\end{tabular}
	\end{center}
	\caption{A black and white image (a) and its grey scales distribution (b).}\label{Test3}
\end{figure} 

\begin{figure}[!h]
	\begin{center}
		\begin{tabular}{cc}
			\includegraphics[width=.475\textwidth]{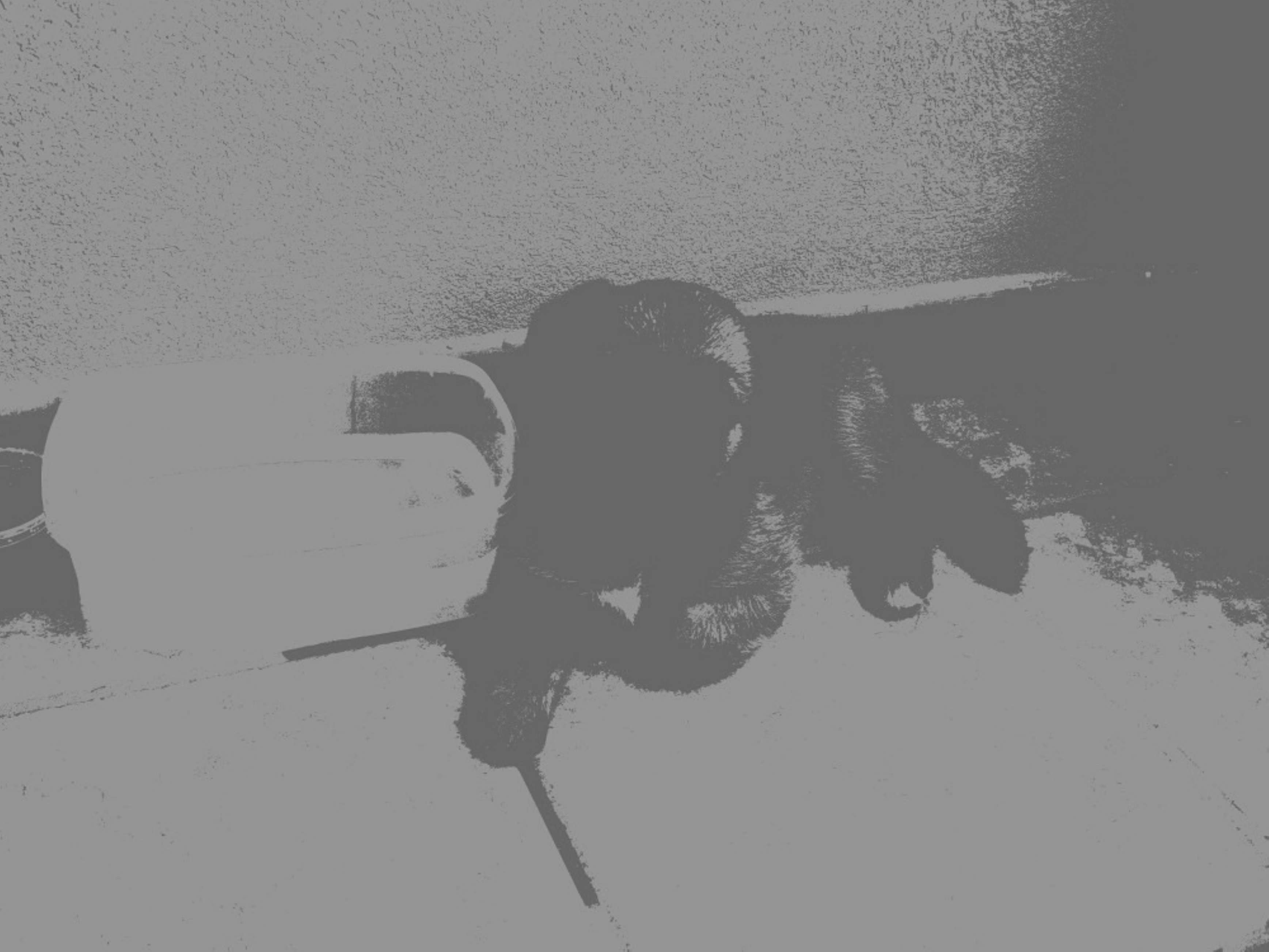}&
			\includegraphics[width=.45\textwidth]{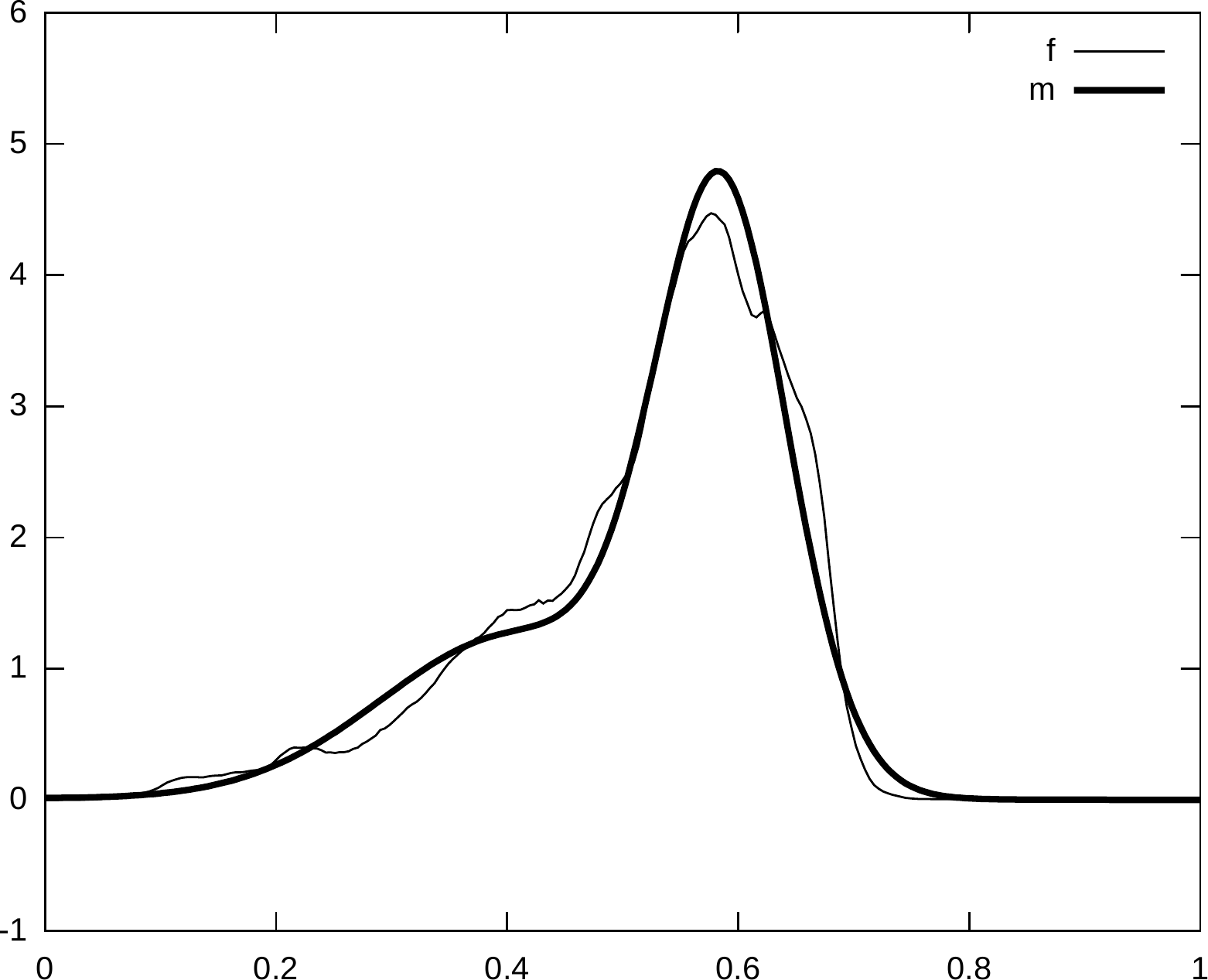}\\
			\includegraphics[width=.475\textwidth]{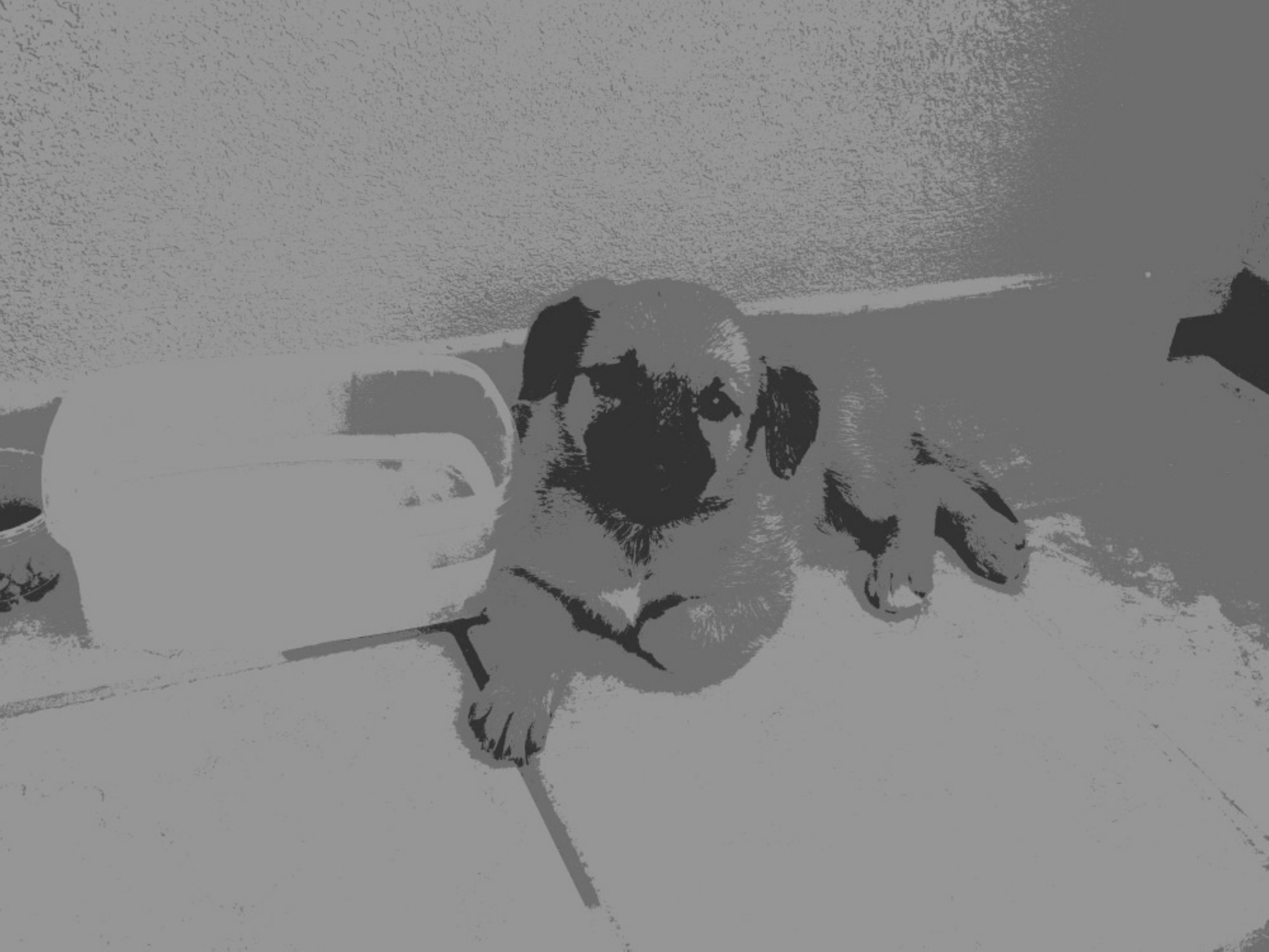}&
			\includegraphics[width=.45\textwidth]{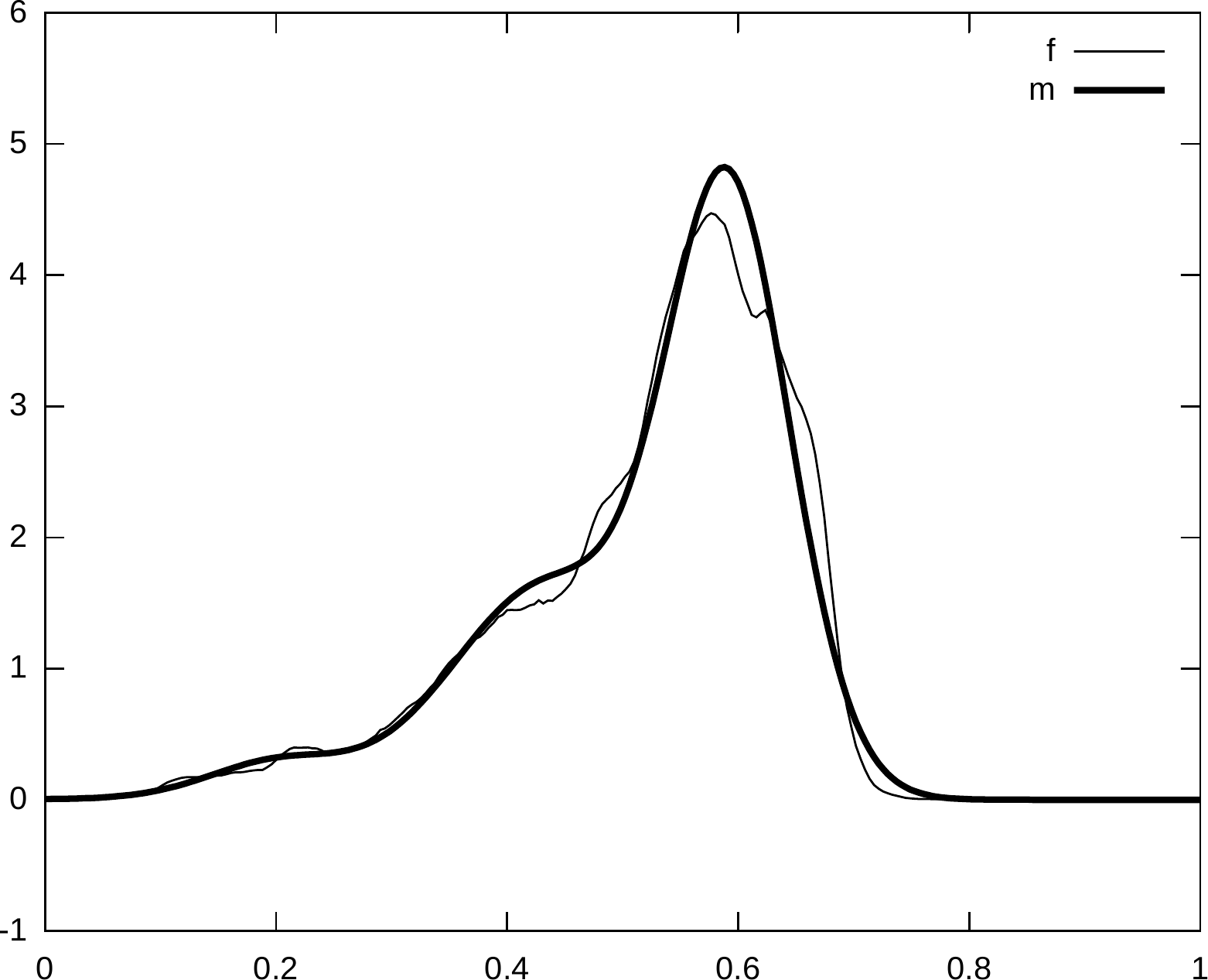}\\
			\includegraphics[width=.475\textwidth]{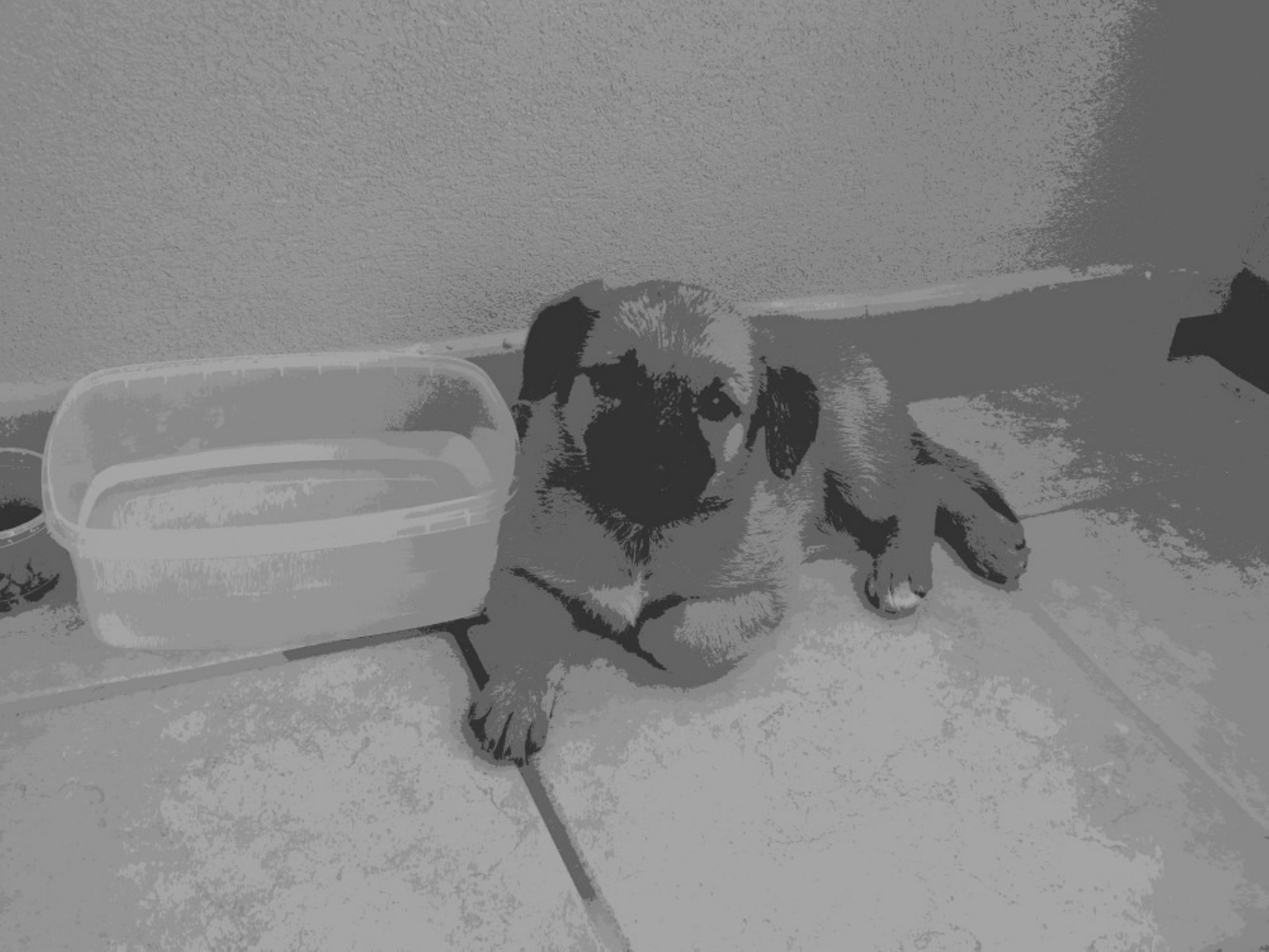}&
			\includegraphics[width=.45\textwidth]{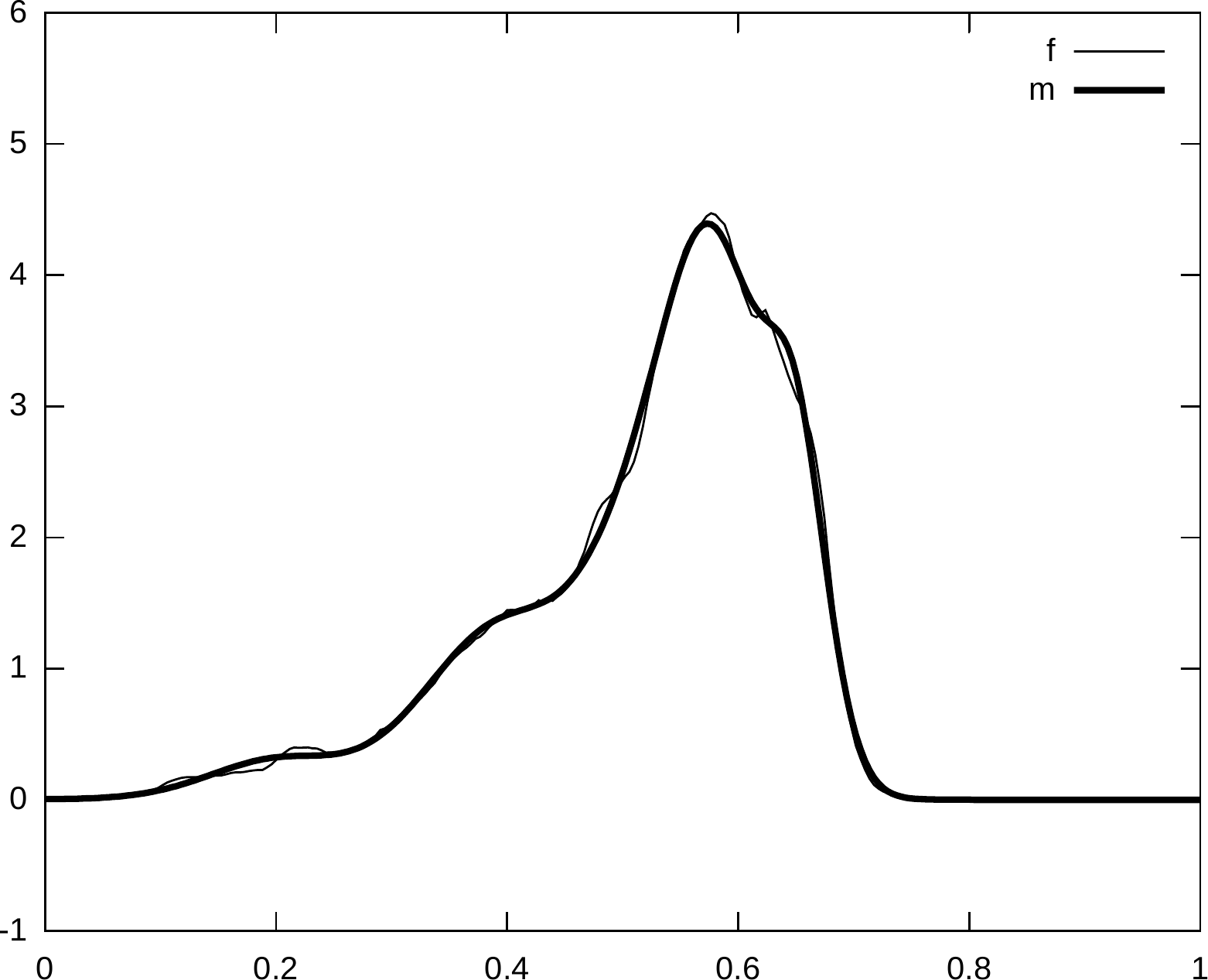}\\
		\end{tabular}
	\end{center}
	\caption{Color quantization via MFG clustering (left panels) and the corresponding mixtures (right panels), for $K=2,3,5$ (from top to bottom panels).}\label{Test3c}
\end{figure} 

\noindent{\bf Test 4. } We finally consider a two dimensional example. We take some data from the Elki project \cite{elki}, 
forming a ``mouse'' similar to a popular comic character. The data set is organized in $3$ clusters (plus some random noise), corresponding 
to the head and the ears of the mouse, see Figure \ref{Test4}. Then, we set $K=3$, we discretize the domain $\Omega=[0,1]^2$ by means of a uniform grid of $N=51^2$ nodes, and we build the 
distribution $f$ as in the previous test, by counting the data points falling in each cell of the grid and normalizing the result to obtain $\int_\Omega f(x)\,dx=1$. 
For the visual representation, we consider RGB triplets in $[0,1]^3$, and we assign to the three clusters the pure colors red $(1,0,0)$, green $(0,1,0)$ and blue $(0,0,1)$ respectively. Then we use the responsibilities $\{\gamma_k\}_{k=1,2,3}\in[0,1]$ to compute the color of each cell of the grid as $C_i=(\gamma_1(x_i),\gamma_2(x_i),\gamma_3(x_i))$, for $i=1,...,N$, which quantifies how much it belongs to a certain cluster. Figure \ref{Test4cluster} shows the surface of the computed mixture and the corresponding clusterization. We observe that the scattered data set is well approximated by the mixture, and  the corresponding three clusters are well separated by  small overlapping regions. 
\begin{figure}[!h]
	\begin{center}
		\begin{tabular}{cc}
			\includegraphics[width=.4\textwidth]{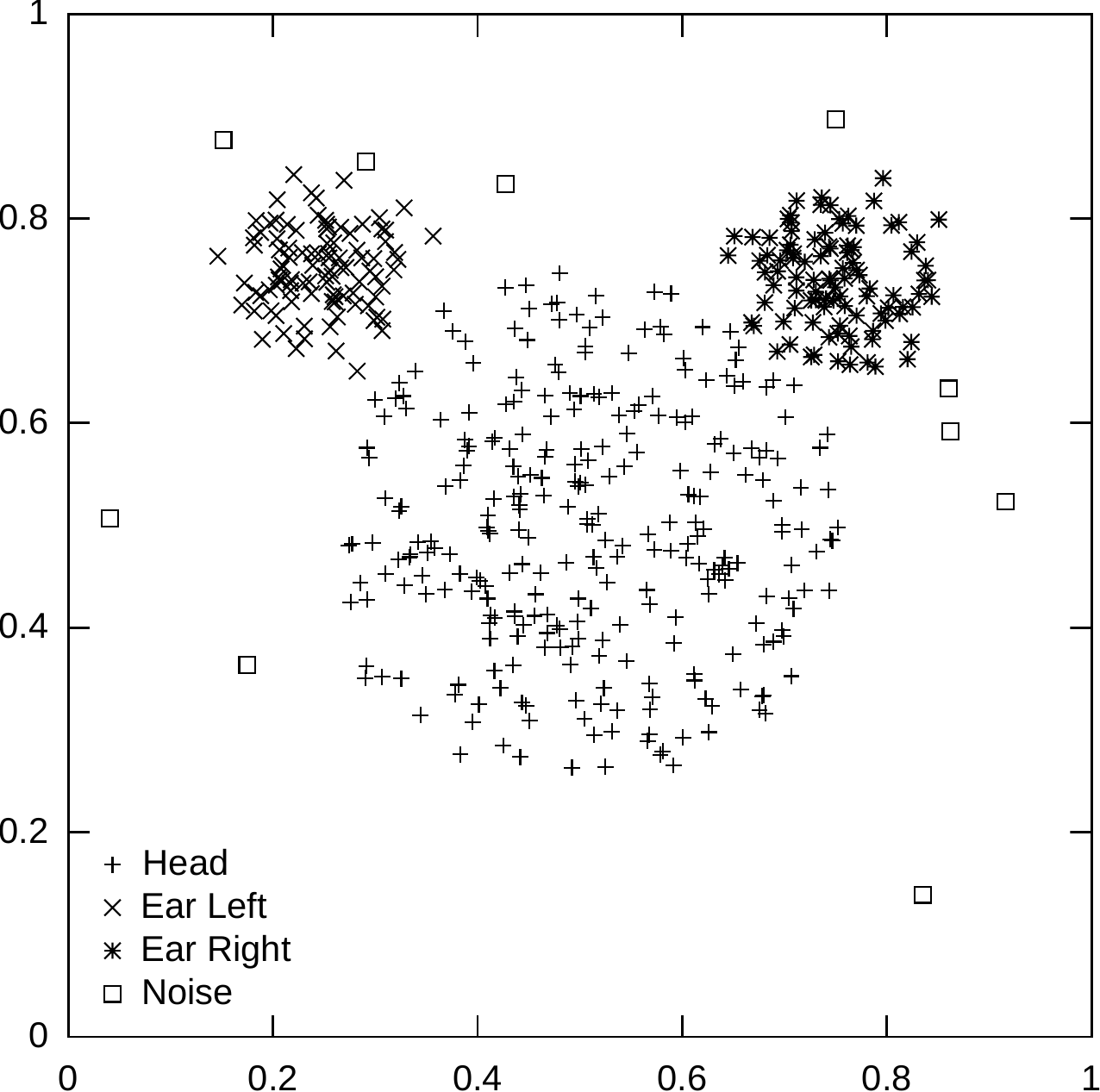}&			\includegraphics[width=.5\textwidth]{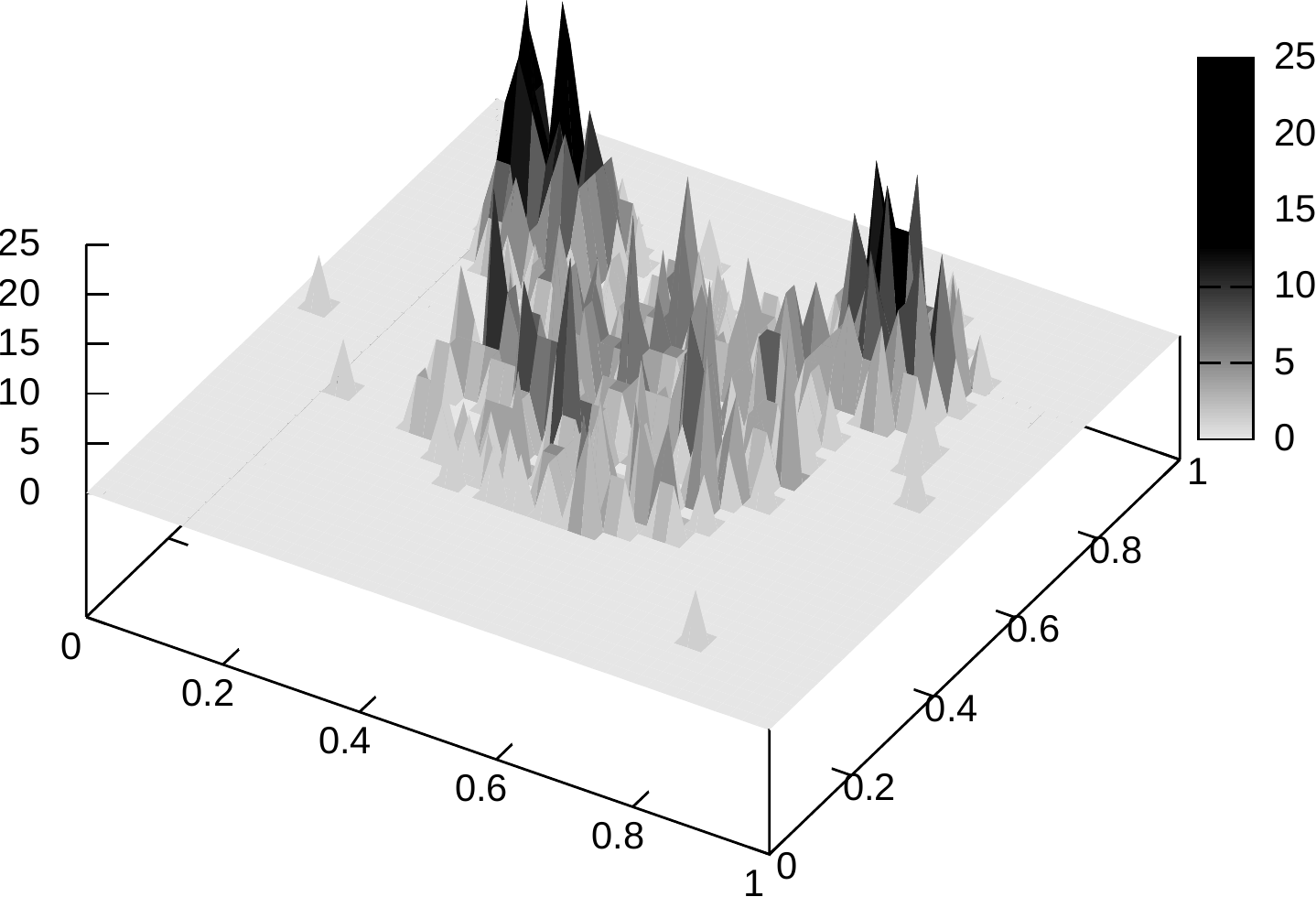}\\
			(a)&(b)
		\end{tabular}
	\end{center}
	\caption{The ``mouse'' data set (a) and the corresponding distribution (b).}\label{Test4}
\end{figure} 

\begin{figure}[!h]
	\begin{center}
		\begin{tabular}{cc}
			\includegraphics[width=.5\textwidth]{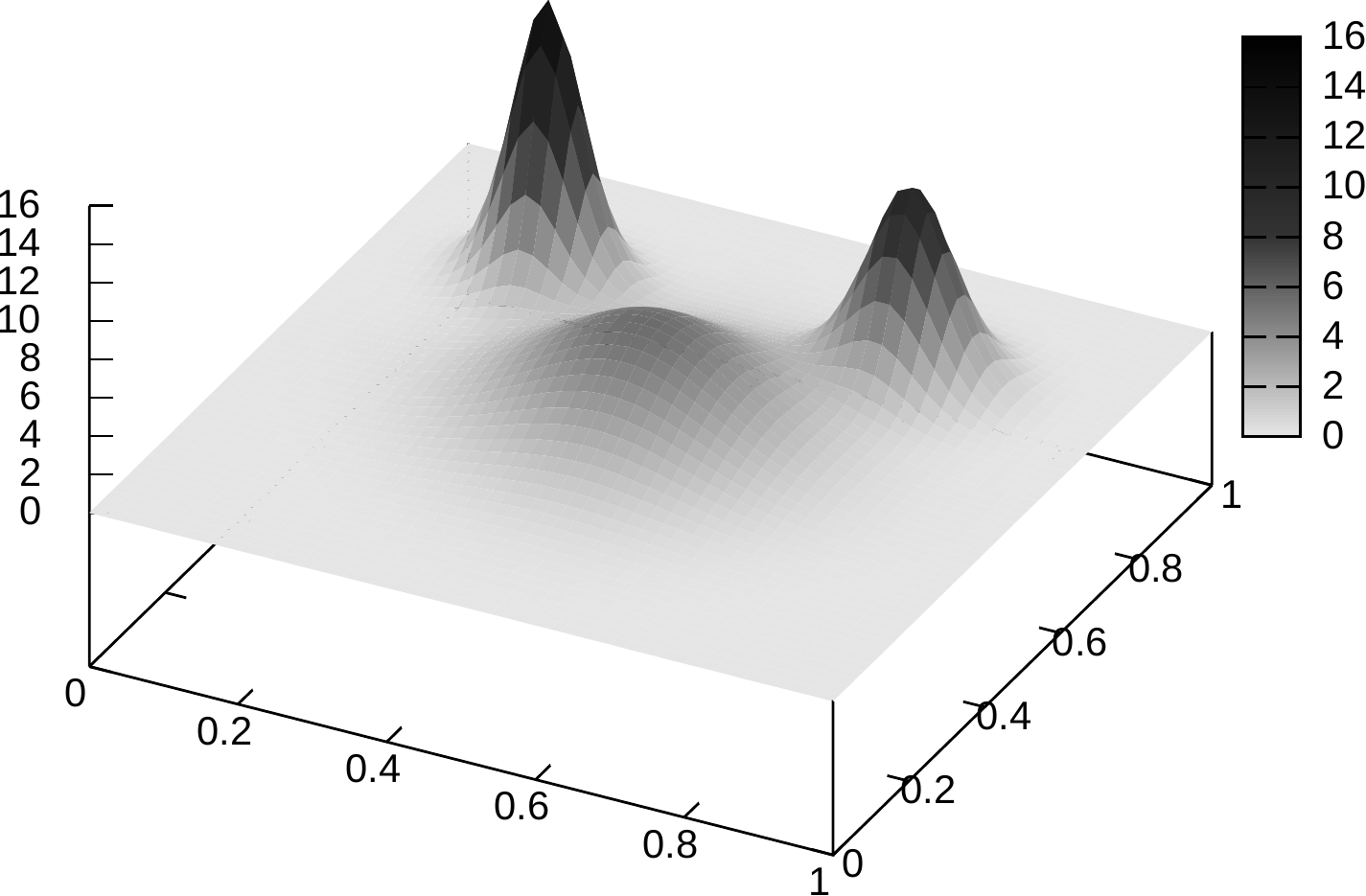}&
			\includegraphics[width=.4\textwidth]{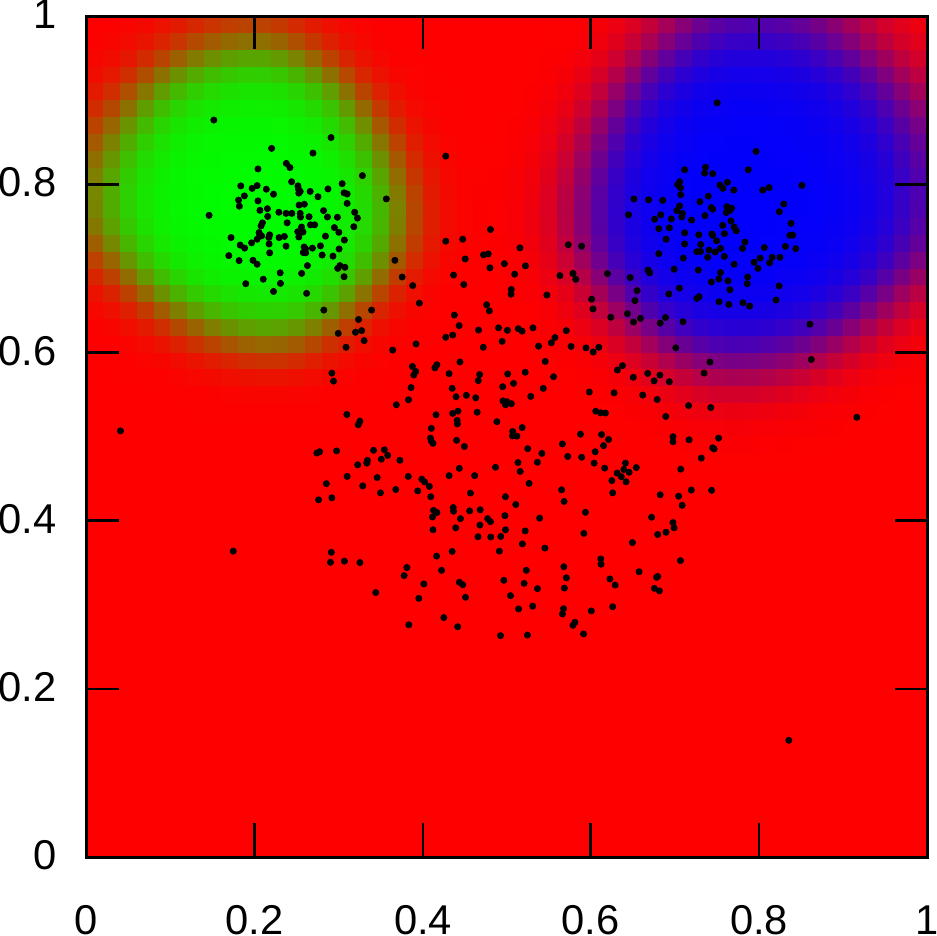}\\
	(a)&(b)
		\end{tabular}
	\end{center}
	\caption{MFG mixture (a) and clustering (b) of the ``mouse'' data set for $K=3$.}\label{Test4cluster}
\end{figure} 


\medskip

\begin{flushright}
	\noindent \verb"laura.aquilanti@sbai.uniroma1.it"\\
	\noindent \verb"fabio.camilli@sbai.uniroma1.it"\\
	SBAI, Sapienza Universit\`{a} di Roma\\
	via A.Scarpa 14, 00161 Roma (Italy)
	
\end{flushright}

\begin{flushright}
	
	\noindent \verb"cacace@mat.uniroma3.it"\\
	Dipartimento di Matematica e Fisica\\
	Universit\`{a} degli Studi Roma Tre\\
	Largo S. L. Murialdo 1, 00146   Roma (Italy)
\end{flushright}


\begin{flushright}
	\noindent \verb"r.demaio@iconsulting.biz"\\
	 IConsulting\\
       Via della Conciliazione 10, 00193  Roma (Italy)
	
\end{flushright}

\end{document}